\documentclass[10pt,draftcls,onecolumn, journal]{IEEEtran}
\IEEEoverridecommandlockouts                              	
\overrideIEEEmargins                          	

% --------------------------------------------------------------------------------------------------------------------------
% Packages
% --------------------------------------------------------------------------------------------------------------------------

\usepackage{amsmath}
\usepackage{amsfonts}
\usepackage{amssymb}
\usepackage{cite}
\usepackage{hhline}
\usepackage{multirow}
\usepackage{algorithmic}
\usepackage[ruled]{algorithm}
\usepackage{xcolor}
\usepackage{bbm}
\usepackage{lipsum}
\usepackage{graphicx}
\usepackage{epstopdf}
\usepackage{lipsum}
\usepackage{blindtext}
\graphicspath{{./figures/}}
\DeclareGraphicsExtensions{.png}

% --------------------------------------------------------------------------------------------------------------------------
% New Commands:
% --------------------------------------------------------------------------------------------------------------------------

\newtheorem{remark}{\bfseries Remark}
\newtheorem{definition}{\bfseries Definition}
\newtheorem{theorem}{\bfseries Theorem}
\newtheorem{lemma}{\bfseries Lemma}
\newtheorem{assumption}{\bfseries Assumption}

% =============================================================================================================================
%
% BEGIN DOCUMENT
%
% =============================================================================================================================

\begin{document}

\title{An Asynchronous Mini-Batch Algorithm for \\ Regularized Stochastic Optimization}

\author{Hamid Reza Feyzmahdavian, Arda Aytekin, and Mikael Johansson
\thanks{H. R. Feyzmahdavian, A. Aytekin, and M. Johansson are with the Department of Automatic Control, School of Electrical Engineering and ACCESS Linnaeus Center, Royal Institute of Technology (KTH), SE-100 44 Stockholm, Sweden.
Emails: {\tt \{hamidrez, aytekin, mikaelj\}@kth.se.}}
}
\maketitle

% =============================================================================================================================
%
% ABSTRACT
%
% =============================================================================================================================

\begin{abstract}
Mini-batch optimization has proven to be a powerful paradigm for large-scale learning. However, the state of the art parallel mini-batch algorithms assume synchronous operation or cyclic update orders. When worker nodes are heterogeneous (due to different computational capabilities or different communication delays), synchronous and cyclic operations are inefficient since they will leave workers idle waiting for the slower nodes to complete their computations. In this paper, we propose an asynchronous mini-batch algorithm for regularized stochastic optimization problems with smooth loss functions that eliminates idle waiting and allows workers to run at their maximal update rates. We show that by suitably choosing the step-size values, the algorithm achieves a rate of the order $\mathcal{O}(1/\sqrt{T})$ for general convex regularization functions, and the rate $\mathcal{O}(1/T)$ for strongly convex regularization functions, where $T$ is the number of iterations. In both cases, the impact of asynchrony on the convergence rate of our algorithm is asymptotically negligible, and a near-linear speedup in the number of workers can be expected. Theoretical results are confirmed in real implementations on a distributed computing infrastructure.
\end{abstract}

% =============================================================================================================================
%
% INTRODUCTION
%
% =============================================================================================================================

\section{Introduction}
\label{sec:Introduction}

Many optimization problems that arise in machine learning, signal processing, and statistical estimation can be formulated as \textit{regularized stochastic optimization} (also referred to as \textit{stochastic composite optimization}) problems in which one jointly minimizes the expectation of a stochastic loss function plus a possibly nonsmooth regularization term. Examples include Tikhonov and elastic net regularization, Lasso, sparse logistic regression, and support vector machines \cite{Bennett:92,Tibshirani:96,Zou:05,Hastie:09,Shalev:11}.

Stochastic approximation methods such as stochastic gradient descent were among the first algorithms developed for solving stochastic optimization problems~\cite{Robbins:51}.  Recently, these methods have received significant attention due to their simplicity and effectiveness (see, \textit{e.g.},~\cite{Nemirovski:09,Lan2012,Xiao:09,Hu:09,Ghadimi:12,Nedic:14,Needell:14}). In particular, Nemirovski \textit{et. al.}~\cite{Nemirovski:09} demonstrated that for nonsmooth stochastic convex optimization problems, a modified stochastic approximation method, the \textit{mirror descent}, exhibits an unimprovable convergence rate $\mathcal{O}(1/\sqrt{T})$, where $T$ is the number of iterations. Later, Lan~\cite{Lan2012} developed a mirror descent algorithm for stochastic composite convex problems which explicitly accounts for the smoothness of the loss function and achieves the optimal rate. A similar result for the dual averaging method was obtained by Xiao~\cite{Xiao:09}.

The methods for solving stochastic optimization problems cited above are inherently \textit{serial} in the sense that the gradient computations take place on a single processor which has access to the whole dataset. However, it happens more and more often that one single computer is unable to store and handle the amounts of data that we encounter in practical problems. This has caused a strong interest in developing \textit{parallel} optimization algorithms which are able to split the data and distribute the computation across multiple processors or multiple computer clusters (see, \emph{e.g.},~\cite{Tsitsiklis:86,Nedic:01,Zinkevich:09,Zinkevich:10,Lobel:11,Tsianos:12,Recht:13,Li:13,Bianchi:13,McMahan:14,Hong:14,Jaggi:14,Marecek:14,Zhang:14,Zhang:15,Hsieh:15,Wright:15,Richtarik:12} and references therein).

One simple and popular stochastic approximation method is \textit{mini-batching}, where iterates are updated based on the average gradient with respect to multiple data points rather than based on gradients evaluated at a single data at a time. Recently, Dekel \textit{et. al.}~\cite{Dekel:12} proposed a parallel mini-batch algorithm for regularized stochastic optimization problems, in which multiple processors compute gradients in parallel using their own local data, and then aggregate the gradients up a spanning tree to obtain the averaged gradient. While this algorithm can achieve linear speedup in the number of processors, it has the drawback that the processors need to synchronize at each round and, hence, if one of them fails or is slower than the rest, then the entire algorithm runs at the pace of the slowest processor.

In this paper, we propose an \textit{asynchronous} mini-batch algorithm for regularized stochastic optimization problems with smooth loss functions that eliminates the overhead associated with global synchronization. Our algorithm allows multiple processors to work at \textit{different rates}, perform computations \textit{independently} of each other, and update global decision variables using \textit{out-of-date} gradients. A similar model of parallel asynchronous computation was applied to coordinate descent methods for deterministic optimization in~\cite{RRSF:11,liu:14,Liu:15} and mirror descent and dual averaging methods for stochastic optimization in~\cite{AgD:12}. In particular, Agarwal and Duchi~\cite{AgD:12} have analyzed the convergence of asynchronous mini-batch algorithms for smooth stochastic convex problems, and interestingly shown that bounded delays do not degrade the asymptotic convergence. However, they only considered the case where the regularization term is the indicator function of a compact convex set.

We extend the results of~\cite{AgD:12} to general regularization functions (like the $l_1$ norm, often used to promote sparsity), and establish a sharper expected-value type of convergence rate than the one given in~\cite{AgD:12}. Specifically, we make the following contributions:
\begin{itemize}
\item[\textbf{(i)}] For general convex regularization functions, we show that when the constraint set is closed and convex (but not necessarily bounded), the running average of the iterates generated by our algorithm with constant step-sizes converges at rate $\mathcal{O}(1/T)$ to a ball around the optimum. We derive an explicit expression that quantifies how the convergence rate and the residual error depends on loss function properties and algorithm parameters such as the constant step-size, the batch size, and the maximum delay bound $\tau_{\max}$.
\item[\textbf{(ii)}] For general convex regularization functions and compact constraint sets, we prove that the running average of the iterates produced by our algorithm with a time-varying step-size converges to the true optimum (without residual error) at rate
\begin{align*}
\mathcal{O}\left(\frac{(\tau_{\max}+1)^2}{T}+\frac{1}{\sqrt{T}}\right).
\end{align*}
This result improves upon the previously known rate
\begin{align*}
\mathcal{O}\left(\frac{\tau^2_{\max}\log T}{T}+\frac{\tau_{\max}+1}{T}+ \frac{1}{\sqrt{T}}\right)
\end{align*}
for delayed stochastic mirror descent methods with time-varying step-sizes given in~\cite{AgD:12}. In this case, our algorithm enjoys near-linear speedup as long as the number of processors is $\mathcal{O}(T^{1/4})$.
\item[\textbf{(iii)}] When the regularization function is strongly convex and the constraint set is closed and convex, we establish that the iterates converge at rate
\begin{align*}
\mathcal{O}\left(\frac{(\tau_{\max}+1)^4}{T^2}+\frac{1}{{T}}\right).
\end{align*}
If the number of processors is of the order of $\mathcal{O}(T^{1/4})$, this rate is $\mathcal{O}(1/T)$ asymptotically in $T$, which is the best known rate for strongly convex stochastic optimization problems in a serial setting.
\end{itemize}

The remainder of the paper is organized as follows. In Section~\ref{sec:Preliminaries}, we introduce the notation and review some preliminaries that are essential for the development of the results in this paper. In Section~\ref{sec:Problem Statement}, we formulate the problem and discuss our assumptions. The proposed asynchronous mini-batch algorithm and its main theoretical results are presented in Section~\ref{sec:Algorithm}. Computational experience is reported in Section~\ref{sec:Simulation} while Section~\ref{sec:Conclusions} concludes the paper.

% =============================================================================================================================
%
% Notation and Preliminaries
%
% =============================================================================================================================

\section{Notation and Preliminaries}
\label{sec:Preliminaries}

\subsection{Notation}

We let $\mathbb{N}$ and $\mathbb{N}_0$ denote the set of natural numbers and the set of natural numbers including zero, respectively. The inner product of two vectors $x, y\in\mathbb{R}^n$ is denoted by $\langle x,y\rangle$. We assume that $\mathbb{R}^n$ is endowed with a norm $\|\cdot\|$, and use $\|\cdot\|_*$  to represent the corresponding dual norm, defined by
\begin{align*}
\|y\|_{*}=\underset{\|x\|\leq 1}{\sup} \;\langle x,y\rangle.
\end{align*}

\subsection{Preliminaries}

Next, we review the key definitions and results necessary for developing the main results of this paper. We start with the definition of a \textit{Bregman distance function}, also referred to as a \textit{prox-function}.

\begin{definition}
\textit{
A function $\omega: X \rightarrow \mathbb{R}$ is called a distance generating function with modulus $\mu_\omega>0$ with respect to norm $\|\cdot \|$, if $\omega$ is continuously differentiable and  $\mu_\omega$-strongly convex with respect to $\|\cdot \|$ over the set $X\subseteq\mathbb{R}^n$. That is, for all $x,y\in X$,
\begin{align*}
\omega (y)\geq \omega (x)+\langle \nabla \omega(x), y-x\rangle+\frac{\mu_\omega}{2}\|y-x \|^2.
\end{align*}
Every distance generating function introduces a corresponding Bregman distance function
\begin{align*}
D_{\omega}(x,y):= \omega(y)-\omega (x)-\langle \nabla \omega(x), y-x\rangle.
\end{align*}}
\end{definition}

For example, choosing $\omega(x)=\frac{1}{2}\| x\|_2^2$, which is $1$-strongly convex with respect to the $l_2$-norm over any convex set $X$, would result in $D_{\omega}(x,y)=\frac{1}{2}\| x-y\|_2^2$. Another common example of distance generating functions is the entropy function
\begin{align*}
\omega(x)=\sum_{i=1}^n x_i \log x_i,
\end{align*}
which is $1$-strongly convex with respect to the $l_1$-norm over the standard simplex
\begin{align*}
\Delta:=\left\{x\in \mathbb{R}^n\;\biggl |\; \sum_{i=1}^n x_i = 1,\; x\geq 0\right\},
\end{align*}
and its associated Bregman distance function is
\begin{align*}
D_{\omega}(x,y)=\sum_{i=1}^n y_i \log \frac{y_i}{x_i}.
\end{align*}
The main motivation to use a generalized distance generating function, instead of the usual Euclidean distance function, is to design optimization algorithms that can take advantage of the geometry of the feasible set (see, \emph{e.g.},~\cite{Beck:03,Nemirovski:09,Tseng:10,Duchi:10}).

\begin{remark}
\label{Remark 1}
The strong convexity of the distance generating function $\omega$ always ensures that
\begin{align*}
 D_{\omega}(x,y)\geq \frac{\mu_\omega}{2}\|y-x \|^2,\quad \forall x,y\in X,
\end{align*}
and $D_{\omega}(x,y)=0$ if and only if $x=y$.
\end{remark}

\begin{remark}
Throughout the paper, there is no loss of generality to assume that $\mu_\omega=1$. Indeed, if $\mu_\omega \neq 1$, we can choose the scaled function $\overline{\omega}(x)=\frac{1}{\mu_\omega} \omega(x)$, which has modulus $\overline{\mu}_\omega=1$,  to generate the Bregman distance function.
\end{remark}

The following definition introduces \textit{subgradients} of proper convex functions.

\begin{definition}
\textit{
For a convex function $\Psi:\mathbb{R}^{n}\rightarrow \mathbb{R}\cup \{+\infty\}$, a vector $s\in\mathbb{R}^n$ is called a subgradient of $\Psi$ at $x \in\mathbb{R}^n$ if
\begin{align*}
\Psi(y)\geq \Psi(x)+\langle s,y-x\rangle,\quad \forall y\in\mathbb{R}^n.
\end{align*}
The set of all subgradients of $\Psi$ at $x$ is called the subdifferential of $\Psi$ at $x$, and is denoted by $\partial \Psi(x)$.}
\end{definition}

% =============================================================================================================================
%
% Problem Setup
%
% =============================================================================================================================

\section{Problem Setup}
\label{sec:Problem Statement}

We consider stochastic convex optimization problems of the form
\begin{align}
\underset{x}{\textup{minimize}}\; \phi(x):={\mathbb{E}}_\xi\bigl[F(x,\xi)\bigr]+\Psi(x).
\label{Optimization Problem 1}
\end{align}
Here, $x\in\mathbb{R}^n$ is the decision variable, $\xi$ is a random vector whose probability distribution $\mathcal{P}$ is supported on a set $\Xi \subseteq~\mathbb{R}^m$, $F(\cdot,\xi)$ is convex and differentiable for each $\xi\in\Xi$, and $\Psi(x)$ is a proper convex function that may be nonsmooth and extended real-valued. Let us define
\begin{align}
f(x):={\mathbb{E}}_\xi\bigl[F(x,\xi)\bigr]=\int_{\Xi}F(x,\xi)d \mathcal{P}(\xi).
\label{Expectation function}
\end{align}
Note that the expectation function $f$ is convex, differentiable, and $\nabla f(x)=\mathbb{E}_\xi [\nabla_x F(x,\xi)]$~\cite{Rockafellar:82}. We use $X^{\star}$ to denote the set of optimal solutions of Problem~\eqref{Optimization Problem 1} and $\phi^{\star}$ to denote the corresponding optimal value.

A difficulty when solving optimization problem~\eqref{Optimization Problem 1} is that the distribution ${\mathcal P}$ is often unknown, so the expectation~\eqref{Expectation function} cannot be computed. This situation occurs frequently in data-driven applications such as machine learning. To support these applications, we do not assume knowledge of $f$ (or of ${\mathcal P}$), only access to a stochastic oracle. Each time the oracle is queried with an $x\in \mathbb{R}^n$, it generates an independent and identically distributed (i.i.d.) sample $\xi$ from ${\mathcal P}$ and returns $\nabla_x F(x,\xi)$.

We also impose the following assumptions on Problem~\eqref{Optimization Problem 1}.
\begin{assumption}[\textbf{\small{Existence of a minimum}}\normalsize]
\label{Assumption 1}
The optimal set $X^{\star}$ is nonempty.
\end{assumption}

\begin{assumption}[\textbf{\small Lipschitz continuity of} $F$\normalsize]
\label{Assumption 2}
For each $\xi\in \Xi$, the function $F(\cdot,\xi)$ has Lipschitz continuous gradient with constant $L$. That is, for all $y,z\in \mathbb{R}^n$,
\begin{align*}
\| \nabla_x F(y,\xi)- \nabla_x F(z,\xi) \|_{*}\leq  L \|y-z\|.
\end{align*}
\end{assumption}

Note that under Assumption~\ref{Assumption 2}, $\nabla f(x)$ is also Lipschitz continuous with the same constant $L$~\cite{Xiao:09}.

\begin{assumption}[\textbf{\small{Bounded gradient variance}}\normalsize]
\label{Assumption 3}
There exists a constant $\sigma\geq 0$ such that
\begin{align*}
\mathbb{E}_\xi\bigl[\|\nabla_x F(x,\xi)-\nabla f(x)\|_*^2\bigr]\leq \sigma^2,\quad \forall x\in \mathbb{R}^n.
\end{align*}
\end{assumption}

\begin{assumption}[\textbf{\small Closed effective domain of} $\Psi$\normalsize]
\label{Assumption 4}
The function $\Psi$ is simple and lower semi-continuous, and its effective domain, $\textup{dom}\;\Psi=\{x\in\mathbb{R}^n\;|\;\Psi(x)<+\infty\}$, is closed.
\end{assumption}

Possible choices of $\Psi$ include:

\begin{itemize}
\item \textit{Unconstrained smooth minimization}: $\Psi(x)=0$.
\item \textit{Constrained smooth minimization}: $\Psi$ is the indicator function of a non-empty closed convex set $C\subseteq \mathbb{R}^n$, \textit{i.e.},
\begin{align*}
\Psi(x)=I_C(x):=& \left\{
\begin{array}[l]{ll}
\;\;0,\quad &\textup{if}\;\;x\in C,\\
+\infty,\quad &\textup{otherwise}.
\end{array}
\right.
\end{align*}
\item $l_1$-\textit{regularized minimization}: $\Psi(x)=\lambda \|x\|_1$ with $\lambda>0$.
\item \textit{Constrained} $l_1$-\textit{regularized minimization}: In this case, $\Psi(x)=\lambda \|x\|_1+I_C(x)$ with $\lambda>0$.
\end{itemize}

Several practical problems in machine learning, statistical applications, and signal processing satisfy Assumptions~\ref{Assumption 1}--\ref{Assumption 4} (see, \textit{e.g.},~\cite{Tibshirani:96,Zou:05,Hastie:09}). One such example is $l_1$-\textit{regularized logistic regression} for sparse binary classification. We are then given a large number of observations
\begin{align*}
\bigl\{\xi_j=(a_j,b_j)\;|\;a_j\in\mathbb{R}^n,\;b_j\in\{-1,+1\},\;j=1,\ldots,m\bigr\},
\end{align*}
drawn i.i.d. from an unknown distribution $\mathcal{P}$, and want to solve the minimization problem~\eqref{Optimization Problem 1} with
\begin{align*}
F(x,\xi)=\log\bigl(1+\textup{exp}(-b\langle a,x\rangle\bigr),
\end{align*}
and $\Psi(x)=\lambda \|x\|_1$. The role of $l_1$ regularization is to produce sparse solutions.

One approach for solving Problem~\eqref{Optimization Problem 1} is the \textit{serial mini-batch method} based on the mirror descent scheme~\cite{Dekel:12}. Given a point $x\in \textup{dom}\; \Psi$, a single processor updates the decision variable $x$ by sampling $b$ i.i.d. random variables $\xi_1,\ldots,\xi_b$ from $\mathcal{P}$, computing the averaged stochastic gradient
\begin{align*}
{g}_{\textup{ave}}=\frac{1}{b}\sum_{i=1}^b \nabla_x F(x,\xi_i),
\end{align*}
and performing the composite mirror descent update
\begin{align*}
x\leftarrow\underset{z}{\textup{argmin}}\biggl\{\bigl\langle {g}_{\textup{ave}},z\bigr\rangle+\Psi(z)+\frac{1}{\gamma}D_\omega(x,z)\biggr\},
\end{align*}
where $\gamma$ is a positive step-size parameter. Under Assumptions~\ref{Assumption 1}--\ref{Assumption 4} and choosing an appropriate step-size, this algorithm is guaranteed to converge to the optimum~\cite[Theorem 9]{Dekel:12}. However, in many emerging applications, such as large-scale machine learning and statistics, the size of dataset is so huge that it cannot fit on one machine. Hence, we need optimization algorithms that can be conveniently and efficiently executed in parallel on multiple processors.

% =============================================================================================================================
%
% Asynchronous Mini-batch Algorithm (Description, Convergence rate for convex and strongly convex objective functions)
%
% =============================================================================================================================

\section{An Asynchronous Mini-Batch Algorithm}
\label{sec:Algorithm}

In this section, we will present an \textit{asynchronous} mini-batch algorithm that exploits multiple processors to solve Problem~\eqref{Optimization Problem 1}. We characterize the iteration complexity and the convergence rate of the proposed algorithm, and show that these compare favourably with the state of the art.

\subsection{Description of Algorithm}
\label{sec:Description of Algorithm}

We assume \textit{p} processors have access to a shared memory for the decision variable $x$. The processors may have different capabilities (in terms of processing power and access to data) and are able to update $x$ without the need for coordination or synchronization. Conceptually, the algorithm lets each processor run its own stochastic composite mirror descent process, repeating the following steps:

\begin{enumerate}
\item Read $x$ from the shared memory and load it into the local storage location $\widehat{x}$;
\item Sample $b$ i.i.d random variables $\xi_1,\ldots,\xi_b$ from the distribution $\mathcal{P}$;
\item Compute the averaged stochastic gradient vector
\begin{align*}
{\widehat{g}}_{\textup{ave}}=\frac{1}{b}\sum_{i=1}^b \nabla_x F(\widehat{x},\xi_i);
\end{align*}
\item Update current $x$ in the shared memory via
\begin{align*}
x\leftarrow\underset{z}{\textup{argmin}}\biggl\{\bigl\langle {\widehat{g}}_{\textup{ave}},z\bigr\rangle+\Psi(z)+\frac{1}{\gamma}D_\omega(x,z)\biggr\}.
\end{align*}
\end{enumerate}

The algorithm can be implemented in many ways as depicted in Figure~\ref{fig:implementation}. One way is to  consider the $p$ processors as peers that each execute the four-step algorithm independently of each other and only share the global memory for storing $x$. In this case, each processor reads the decision vector twice in each round: once in the first step (before evaluating the averaged gradient), and once in the last step (before carrying out the minimization). To ensure correctness, Step 4 must be an atomic operation, where the executing processor puts a write lock on the global memory until it has written back the result of the minimization (cf. Figure~\ref{fig:implementation}, left). The algorithm can also be executed in a master-worker setting. In this case, each of the worker nodes retrieves $x$ from the master in Step~1 and returns the averaged gradient to the master in Step 3; the fourth step (carrying out the minimization) is executed by the master (cf. Figure~\ref{fig:implementation}, right)

\begin{figure}[!ht]
    \centering
    \includegraphics[width=0.8\columnwidth]{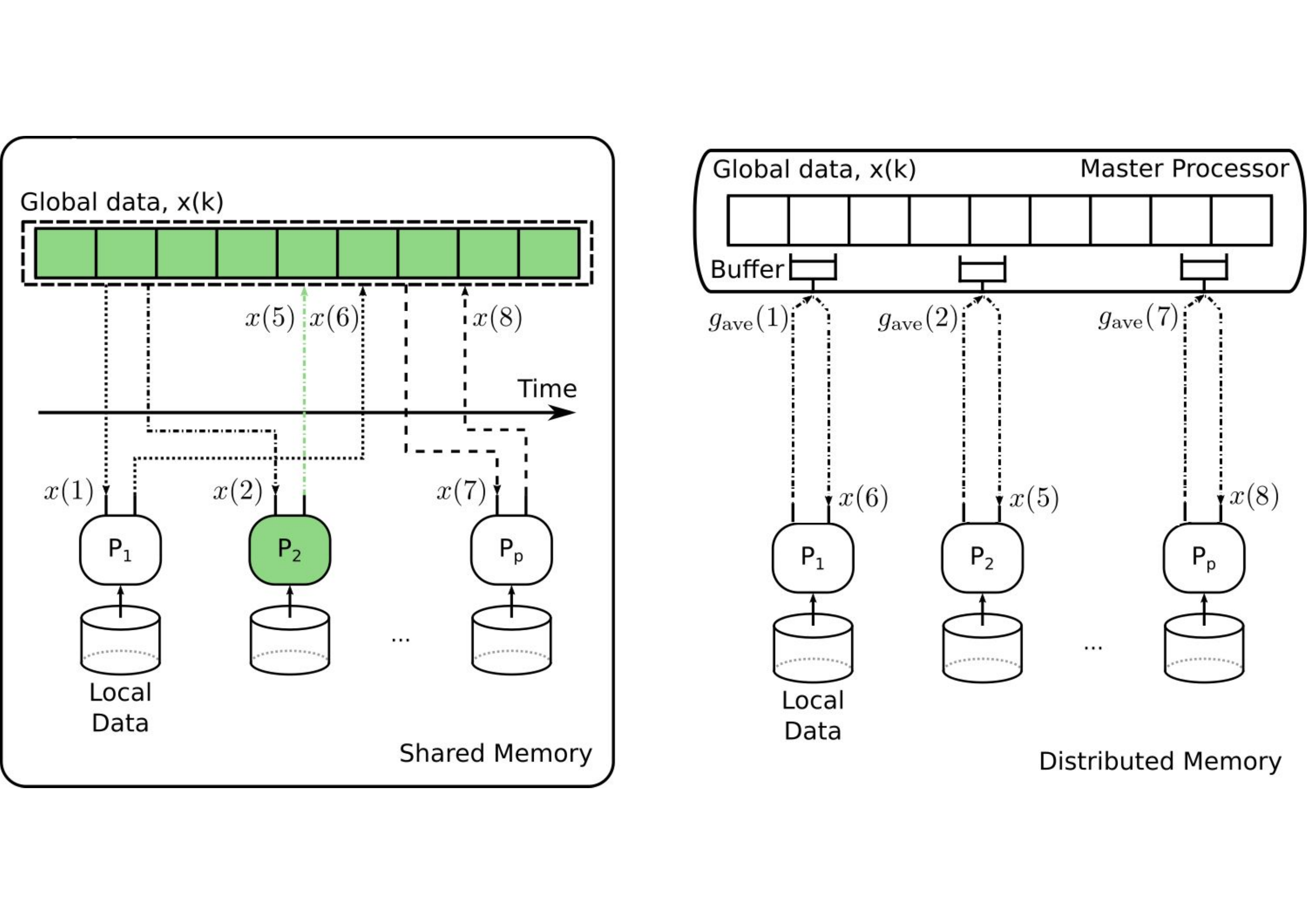}
     \caption{Illustration of two conceptually different realizations of Algorithm~\ref{Algorithm 1}: $(1)$ a shared memory implementation (left); $(2)$ a master-worker implementation (right). In the shared memory setting shown to the left, processor $P_{2}$ reads $x(2)$ from the shared memory and computes the averaged gradient vector ${g}_{\textup{ave}}(2)=\frac{1}{b}\sum_{i=1}^b \nabla_x F\bigl({x}(2),\xi_i\bigr)$. As the processors are being run without synchronization, $x(3)$ and $x(4)$ are written to the shared memory by other processors while $P_{2}$ is evaluating ${g}_{\textup{ave}}(2)$. The figure shows a snapshot of the algorithm at time instance $k = 5$, at which the shared memory is locked by $P_{2}$ to read the current $x$, \textit{i.e.} $x(4)$, to update it using the out-of-date gradient ${g}_{\textup{ave}}(2)$, and write $x(5)$ to the memory. In the master-worker setting illustrated to the right, workers evaluate averaged gradient vectors in parallel and send their computations to buffers on the master processor, which is the sole entity with access to the global memory.  The master performs an update using (possibly) out-of-date gradients and passes the updated decision vector $x$ back to the workers.}
    \label{fig:implementation}
\end{figure}

Independently of how we choose to implement the algorithm,  processors may work at different rates: while one processor updates the decision vector (in the shared memory setting) or send its averaged gradient to the master (in the master-worker setting), the others are generally busy computing averaged gradient vectors. The processors that perform gradient evaluations do not need to be aware of updates to the decision vector, but can continue to operate on stale information about $x$. Therefore, unlike \textit{synchronous} parallel mini-batch algorithms~\cite{Dekel:12}, there is no need for processors to wait for each other to finish the gradient computations. Moreover, the value $\widehat{x}$ at which the average of gradients is evaluated by a processor may differ from the value of $x$ to which the update is applied.

Algorithm~\ref{Algorithm 1} describes the $p$ asynchronous processes that run in parallel. To describe the progress of the overall optimization process, we introduce a counter $k$ that is incremented each time $x$ is updated. We let $d(k)$ denote the time at which $\widehat{x}$ used to compute the averaged gradient involved in the update of $x(k)$ was read from the shared memory.  It is clear that $0\leq d(k)\leq k$ for all $k\in \mathbb{N}_0$. The value
\begin{align*}
\tau(k):=k-d(k)
\end{align*}
can be viewed as the delay between reading and updating for processors and captures the staleness of the information used to compute the average of gradients for the \textit{k}-th update.  We assume that the delay is not too long, \textit{i.e.}, there is a nonnegative integer $\tau_{\max}$ such that
\begin{align*}
0\leq \tau(k)\leq \tau_{\max}.
\end{align*}
The value of $\tau_{\max}$ is an indicator of the asynchronism in the algorithm and in the execution platform. In practice, $\tau_{\max}$ will depend on the number of parallel processors used in the algorithm~\cite{RRSF:11,liu:14,Liu:15}. Note that the cyclic-delay mini-batch algorithm~\cite{AgD:12}, in which the processors are ordered and each updates the decision variable under a fixed schedule, is a special case of Algorithm~\ref{Algorithm 1} where $d(k)=k-p+1$, or, equivalently, $\tau(k)=p-1$ for all $k$.

{
\begin{algorithm} [t]
\caption{{Asynchronous Mini-batch Algorithm (running on each processor)}}
\label{Algorithm 1}
\begin{algorithmic}[1]
\STATE  \textbf{Inputs}: positive step-sizes $\{\gamma(k)\}_{k\in \mathbb{N}_0}$; batch size $b\in\mathbb{N}$.
\STATE \textbf{Initialization:} $x(0)\in \textup{dom}\;\Psi$; $k=0$.
\REPEAT
\STATE receive inputs $\xi_1,\ldots,\xi_b$ sampled i.i.d. from distribution $\mathcal{P}$;
\begin{align}
&{g}_{\textup{ave}}\bigl(d(k)\bigr)\leftarrow\frac{1}{b}\sum_{i=1}^b \nabla_x F\bigl({x}(d(k)),\xi_i\bigr) ;\nonumber\\
&x(k+1)\leftarrow\underset{z}{\textup{argmin}}\biggl\{\bigl\langle {g}_{\textup{ave}}\bigl(d(k)\bigr),z\bigr\rangle+\Psi(z)+\frac{1}{\gamma(k)}D_\omega(x(k),z)\biggr\}\label{Async. Iteration}\\
&k\leftarrow k+1\nonumber;
\end{align}
\UNTIL {termination test satisfied}
\end{algorithmic}
\end{algorithm}
}

\subsection{Convergence Rate for General Convex Regularization}
\label{sec:Convex Regularization}

The following theorem establishes convergence properties of Algorithm~\ref{Algorithm 1} when a constant step-size is used.

\begin{theorem}
\label{Theorem 1}
\textit{
Let Assumptions~\ref{Assumption 1}--\ref{Assumption 4} hold. Assume also that for all $k\in\mathbb{N}_0$,
\begin{align}
\gamma(k)=\gamma\in \left(0,\;\frac{1}{L(\tau_{\max}+1)^2}\right).
\label{Fixed Step-size}
\end{align}
Then, for every $T\in \mathbb{N}$ and any optimizer $x^{\star}$ of~\eqref{Optimization Problem 1},  we have}
\begin{align*}
\mathbb{E}\bigl[\phi\bigl({x}_{\textup{ave}}(T)\bigr)\bigr]-\phi^{\star}\leq \frac{D_\omega\bigl(x(0),x^{\star}\bigr)}{\gamma T}+\frac{\gamma c\sigma^2}{2b\bigl(1-\gamma L(\tau_{\max}+1)^2\bigr)}\;,
\end{align*}
\textit{where ${x}_{\textup{ave}}(T)$ is the Ces{\'a}ro average of the iterates, \textit{i.e.},
\begin{align*}
{x}_{\textup{ave}}(T):=\frac{1}{T}\sum_{k=1}^T x(k).
\end{align*}
Furthermore, $b$ is the batch size, the expectation is taken with respect to all random variables $\{\xi_i(k)\;|\;i=1,\ldots,b,\;k=0,\ldots,T-1\}$, and $c\in[1,b]$ is given by}
\begin{align*}
 c = \left\{
  \begin{array}{ll}
    1, & \textup{if}\;\|\cdot\|_*=\|\cdot\|_2,\\
    {2\max_{\|x\|\leq 1}\omega(x)},&\textup{otherwise}.
  \end{array}
   \right.
\end{align*}
\end{theorem}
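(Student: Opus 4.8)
The plan is to adapt the standard analysis of composite mirror descent to the asynchronous setting by carefully bookkeeping the error introduced by the delayed gradients. First I would fix an arbitrary optimizer $x^\star$ and apply the first-order optimality condition for the proximal update \eqref{Async. Iteration}: since $x(k+1)$ minimizes a $(1/\gamma)$-strongly convex (with respect to $\|\cdot\|$, using $\mu_\omega=1$) function, there is a subgradient $s(k+1)\in\partial\Psi(x(k+1))$ with
\begin{align*}
\bigl\langle g_{\textup{ave}}(d(k))+s(k+1)+\tfrac{1}{\gamma}\bigl(\nabla\omega(x(k+1))-\nabla\omega(x(k))\bigr),\,z-x(k+1)\bigr\rangle\geq 0
\end{align*}
for all $z\in\textup{dom}\,\Psi$. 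Setting $z=x^\star$ and using the three-point identity $\langle\nabla\omega(x(k+1))-\nabla\omega(x(k)),x^\star-x(k+1)\rangle=D_\omega(x(k),x^\star)-D_\omega(x(k+1),x^\star)-D_\omega(x(k),x(k+1))$, this yields the usual telescoping inequality with the extra term coming from $\langle g_{\textup{ave}}(d(k)),x(k+1)-x^\star\rangle$ to be controlled.

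Next I would split that inner product. Write $g_{\textup{ave}}(d(k))=\nabla f(x(d(k)))+e(k)$ where $e(k):=g_{\textup{ave}}(d(k))-\nabla f(x(d(k)))$ is the zero-mean stochastic error (its conditional second moment is at most $\sigma^2/b$ by Assumption~\ref{Assumption 3} and independence of the $b$ samples, with the constant $c$ entering when translating the $\|\cdot\|_*^2$ bound into a bound via $\omega$ for non-Euclidean norms). For the deterministic part I would use convexity and $L$-smoothness of $f$: $\langle\nabla f(x(d(k))),x^\star-x(k+1)\rangle\leq f(x^\star)-f(x(k+1))+\langle\nabla f(x(d(k)))-\nabla f(x(k+1)),x(k+1)-x^\star\rangle$ is not quite the right split; instead bound $f(x(k+1))\geq f(x(d(k)))+\langle\nabla f(x(d(k))),x(k+1)-x(d(k))\rangle$ and $f(x^\star)\geq f(x(d(k)))+\langle\nabla f(x(d(k))),x^\star-x(d(k))\rangle$, subtract, and combine with $\Psi(x(k+1))-\Psi(x^\star)\leq\langle s(k+1),x(k+1)-x^\star\rangle$, to get $\phi(x(k+1))-\phi^\star$ on the left up to the term $\langle\nabla f(x(d(k))),x(k+1)-x(d(k))\rangle$, which must be absorbed.

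The main obstacle — and the crux of the whole argument — is controlling this leftover term $\langle\nabla f(x(d(k))),x(k+1)-x(d(k))\rangle$ together with $-\tfrac{1}{2\gamma}\|x(k+1)-x(k)\|^2$ coming from $-\tfrac{1}{\gamma}D_\omega(x(k),x(k+1))\leq-\tfrac{1}{2\gamma}\|x(k+1)-x(k)\|^2$. Since $d(k)=k-\tau(k)$ with $\tau(k)\leq\tau_{\max}$, I would telescope $x(k+1)-x(d(k))=\sum_{j=d(k)}^{k}(x(j+1)-x(j))$ and bound, via Young's inequality applied to each of the at most $\tau_{\max}+1$ increments,
\begin{align*}
\bigl\langle\nabla f(x(d(k))),x(k+1)-x(d(k))\bigr\rangle\leq\frac{L(\tau_{\max}+1)}{2}\sum_{j=d(k)}^{k}\|x(j+1)-x(j)\|^2+\frac{1}{2L(\tau_{\max}+1)}\|\nabla f(x(d(k)))\|_*^2,
\end{align*}
or a cleaner variant that keeps the per-step negative terms. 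Summing over $k=0,\dots,T-1$, each increment $\|x(j+1)-x(j)\|^2$ is counted at most $\tau_{\max}+1$ times, so the accumulated positive contribution is at most $\tfrac{L(\tau_{\max}+1)^2}{2}\sum_j\|x(j+1)-x(j)\|^2$, which is dominated by the telescoped $\tfrac{1}{2\gamma}\sum_j\|x(j+1)-x(j)\|^2$ precisely when $\gamma<1/(L(\tau_{\max}+1)^2)$ — this is where the step-size restriction \eqref{Fixed Step-size} and the factor $(\tau_{\max}+1)^2$ originate. After this cancellation, I would take expectations (the cross term $\mathbb{E}\langle e(k),x^\star-x(k)\rangle=0$ since $x(k)$ depends only on samples prior to step $k$, while the term $\langle e(k),x(k+1)-x(k)\rangle$ is handled by another Young's inequality producing the $\tfrac{\gamma c\sigma^2}{2b(1-\gamma L(\tau_{\max}+1)^2)}$ residual), telescope $D_\omega(x(k),x^\star)$, divide by $T$, and invoke Jensen's inequality on the convex function $\phi$ to pass from $\tfrac{1}{T}\sum_{k=1}^T\phi(x(k))$ to $\phi(x_{\textup{ave}}(T))$, yielding the stated bound.
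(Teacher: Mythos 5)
Your overall architecture matches the paper's (first-order optimality condition plus the three-point identity, zero-mean cross term, $c\sigma^2/b$ variance via mini-batch averaging, absorbing squared increments under $\gamma<1/(L(\tau_{\max}+1)^2)$, Jensen at the end), but the step that generates the delay error term is wrong, and it is the crux. Both inequalities you propose for the deterministic part, $f(x(k+1))\geq f(x(d(k)))+\langle\nabla f(x(d(k))),x(k+1)-x(d(k))\rangle$ and $f(x^\star)\geq f(x(d(k)))+\langle\nabla f(x(d(k))),x^\star-x(d(k))\rangle$, are convexity lower bounds; subtracting them gives $\langle\nabla f(x(d(k))),x(k+1)-x^\star\rangle\leq f(x(k+1))-f(x^\star)$, which points the wrong way: to pass from the optimality-condition bound on $\langle g_{\textup{ave}}(d(k))+s(k+1),x(k+1)-x^\star\rangle$ to a bound on $\phi(x(k+1))-\phi^\star$, you need a \emph{lower} bound on $\langle\nabla f(x(d(k))),x(k+1)-x^\star\rangle$ of the form $f(x(k+1))-f(x^\star)-(\text{error})$. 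The correct move --- the paper's Lemma~1 --- is to use the $L$-smoothness descent lemma at $x(d(k))$ (not convexity) for the leg from $x(d(k))$ to $x(k+1)$, and convexity only for the leg from $x(d(k))$ to $x^\star$; then the only error is the quadratic $\frac{L}{2}\|x(k+1)-x(d(k))\|^2$ and no linear term in $\nabla f$ survives.

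Consequently the leftover term you then try to absorb, $\langle\nabla f(x(d(k))),x(k+1)-x(d(k))\rangle$, should never appear, and your proposed Young-type bound on it fails under the stated assumptions: it charges $\frac{1}{2L(\tau_{\max}+1)}\|\nabla f(x(d(k)))\|_*^2$ per iteration, but Assumptions~1--4 give no uniform bound on $\|\nabla f\|_*$ (Theorem~1 does not assume $\textup{dom}\,\Psi$ compact), and even granting such a bound $G$, summing over $k$ and dividing by $T$ leaves an $O(G^2/L)$ additive bias that does not shrink with $\gamma$ and is absent from the theorem, whose residual $\frac{\gamma c\sigma^2}{2b(1-\gamma L(\tau_{\max}+1)^2)}$ comes purely from the noise. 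In the paper's argument the delay enters only through $\frac{L}{2}\|x(k+1)-x(d(k))\|^2\leq\frac{L(\tau_{\max}+1)}{2}\sum_{j=0}^{\tau_{\max}}\|x(k-j)-x(k-j+1)\|^2$ (convexity of the squared norm), whose sum over $k$ is dominated by the accumulated $-\frac{1}{2\gamma}\|x(k+1)-x(k)\|^2$ terms after splitting $\gamma^{-1}=\eta+L(\tau_{\max}+1)^2$, the slack $\eta$ being exactly what absorbs the $\langle e,x(k+1)-x(k)\rangle$ Young term and produces the stated residual. Your increment counting and the rest of the skeleton are right; the missing idea is to generate the squared-increment error via the descent lemma rather than leaving a first-order term weighted by the (unbounded) gradient.
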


\begin{proof}
See Appendix~\ref{Theorem:1}.
\end{proof}

Theorem~\ref{Theorem 1} demonstrates that for any constant step-size $\gamma$ satisfying~\eqref{Fixed Step-size}, the running average of iterates generated by Algorithm~\ref{Algorithm 1} will converge in expectation to a ball around the optimum at a rate of $\mathcal{O}(1/T)$. The convergence rate and the residual error depend on the choice of $\gamma$: decreasing $\gamma$ reduces the residual error, but it also results in a slower convergence. We now describe a possible strategy for selecting the constant step-size. Let $T_{\epsilon}$ be the total number of iterations necessary to achieve $\epsilon$-optimal solution to Problem~\eqref{Optimization Problem 1}, that is, $\mathbb{E}\bigl[\phi\bigl({x}_{\textup{ave}}(T)\bigr)\bigr]-\phi^{\star}\leq \epsilon$ when $T\geq T_\epsilon$. If we pick
\begin{align}
\gamma=\frac{\epsilon}{L\epsilon(\tau_{\max}+1)^2+c\sigma^2/b},
\label{Step-size Constant}
\end{align}
it follows from Theorem~\ref{Theorem 1} that the corresponding ${x}_{\textup{ave}}(T)$ satisfies
\begin{align*}
\mathbb{E}\bigl[\phi\bigl({x}_{\textup{ave}}(T)\bigr)\bigr]-\phi^{\star}\leq \frac{\epsilon_0}{T}\left( L(\tau_{\max}+1)^2+\frac{c\sigma^2}{b\epsilon}\right)+\frac{\epsilon}{2},
\end{align*}
\normalsize
where $\epsilon_0=D_\omega\bigl(x(0),x^{\star}\bigr)$. This inequality tells us that if the first term on the right-hand side is less than $\epsilon/2$, \textit{i.e.}, if
\begin{align*}
T\geq T_{\epsilon}:= 2\epsilon_0 \left(\frac{ L(\tau_{\max}+1)^2}{\epsilon}+\frac{c\sigma^2}{b\epsilon^2}\right),
\end{align*}
then $\mathbb{E}\bigl[\phi\bigl({x}_{\textup{ave}}(T)\bigr)\bigr]-\phi^{\star}\leq \epsilon$. Hence, the iteration complexity of Algorithm~\ref{Algorithm 1} with the step-size choice~\eqref{Step-size Constant} is given by
\begin{align}
\mathcal{O}\left(\frac{L(\tau_{\max}+1)^2}{\epsilon}+\frac{c\sigma^2}{b\epsilon^2}\right).
\label{Complexity Parallel}
\end{align}

As long as the maximum delay bound $\tau_{\max}$ is of the order $1/\sqrt{\epsilon}$, the first term in~\eqref{Complexity Parallel} is asymptotically negligible, and hence the iteration complexity of Algorithm~\ref{Algorithm 1} is asymptotically $\mathcal{O}(c\sigma^2/b{\epsilon}^2)$, which is exactly the iteration complexity achieved by the mini-batch algorithm for solving stochastic convex optimization problems in a serial setting~\cite{Dekel:12}. As discussed before, $\tau_{\max}$ is related to the number of processors used in the algorithm. Therefore, if the number of processors is of the order of $\mathcal{O}(1/\sqrt{\epsilon})$, parallelization does not appreciably degrade asymptotic convergence of Algorithm~\ref{Algorithm 1}. Furthermore, as $p$ processors are being run in parallel, updates occur roughly $p$ times as quickly and in time scaling as $T/p$, the processors may compute $T$ averaged gradient vectors (instead of $T/p$ vectors). This means that the near-linear speedup in the number of processors can be expected.

\begin{remark}
Another strategy for the selection of the constant step-size in Algorithm~\ref{Algorithm 1} is to use $\gamma$ that depends on the prior knowledge of the number of iterations to be performed. More precisely, assume that the number of iterations is fixed in advance, say equal to $T_F$. By choosing  $\gamma$ as
\begin{align*}
\gamma = \frac{1}{L(\tau_{\max}+1)^2+\alpha\sqrt{T_F}}\;,
\end{align*}
for some $\alpha>0$, it follows from Theorem~\ref{Theorem 1} that the running average of the iterates after $T_F$ iterations satisfies
\begin{align*}
\mathbb{E}\bigl[\phi\bigl({x}_{\textup{ave}}(T_F)\bigr)\bigr]-\phi^{\star}\leq& \frac{L(\tau_{\max}+1)^2 D_\omega\bigl(x(0),x^{\star}\bigr)}{T_F}+\frac{1}{\sqrt{T_F}}\left(\alpha D_\omega\bigl(x(0),x^{\star}\bigr)+\frac{c\sigma^2}{2\alpha b}\right).
\end{align*}
It is easy to verify that the optimal choice of $\alpha$, which minimizes the second term on the right-hand-side of the above inequality, is
\begin{align*}
\alpha^{\star}=\frac{\sigma\sqrt{c}}{\sqrt{2bD_\omega\bigl(x(0),x^{\star}\bigr)}}.
\end{align*}
With this choice of $\alpha$, we then have
\begin{align*}
\mathbb{E}\bigl[\phi\bigl({x}_{\textup{ave}}(T_F)\bigr)\bigr]-\phi^{\star}\leq& \frac{L(\tau_{\max}+1)^2 D_\omega\bigl(x(0),x^{\star}\bigr)}{T_F}+\frac{\sigma\sqrt{2cD_\omega\bigl(x(0),x^{\star}\bigr)}}{\sqrt{bT_F}}.
\end{align*}
In the case that $\tau_{\max}=0$, the preceding guaranteed bound reduces to the one obtained in~\cite[Theorem 1]{Lan2012} for the serial stochastic mirror descent algorithm with constant step-sizes. Note that in order to implement Algorithm~\ref{Algorithm 1} with the optimal constant step-size policy, we need to estimate an upper bound on $D_\omega\bigl(x(0),x^{\star}\bigr)$, since $D_\omega\bigl(x(0),x^{\star}\bigr)$ is usually unknown.
\end{remark}

The following theorem characterizes the convergence of Algorithm~\ref{Algorithm 1} with a time-varying step-size sequence when $\textup{dom}\; \Psi$ is \textit{bounded} in addition to being closed and convex.

\begin{theorem}
\label{Theorem 2}
\textit{
Suppose that Assumptions~\ref{Assumption 1}--\ref{Assumption 4} hold. In addition, suppose that $\textup{dom}\; \Psi$ is compact and that $D_{\omega}(\cdot,\cdot)$ is bounded on $\textup{dom}\; \Psi$. Let
\begin{align*}
R^2=\max_{x,y\in\textup{dom}\; \Psi}\; D_{\omega}(x,y).
\end{align*}
If $\{\gamma(k)\}_{k\in\mathbb{N}_0}$ is set to $\gamma(k)^{-1}=L(\tau_{\max}+1)^2+\alpha(k)$ with
\begin{align*}
\alpha(k)=\frac{\sigma\sqrt{c}\sqrt{k+1}}{R\sqrt{b}},
\end{align*}
then the Ces{\'a}ro average of the iterates generated by Algorithm~\ref{Algorithm 1} satisfies
\begin{align*}
\mathbb{E}\bigl[\phi\bigl({x}_{\textup{ave}}(T)\bigr)\bigr]-\phi^{\star}\leq \frac{LR^2(\tau_{\max}+1)^2}{T}+\frac{2\sigma R \sqrt{c}}{\sqrt{bT}},
\end{align*}
for all $T\in\mathbb{N}$.}
\end{theorem}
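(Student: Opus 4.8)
The plan is to follow the argument used to prove Theorem~\ref{Theorem 1}, but to carry the step-size $\gamma(k)$ through all the constants instead of pulling out a fixed $\gamma$, and then to invoke compactness of $\textup{dom}\,\Psi$ at the point where the telescoping sum no longer collapses. First I would write the first-order optimality condition for the prox-update~\eqref{Async. Iteration}: for some $s\in\partial\Psi(x(k+1))$ and every $z\in\textup{dom}\,\Psi$, $\langle \gamma(k)\,g_{\textup{ave}}(d(k)) + \gamma(k)s + \nabla\omega(x(k+1)) - \nabla\omega(x(k)),\, z - x(k+1)\rangle \ge 0$. Setting $z=x^\star$, using the subgradient inequality for $\Psi$ and the three-point identity $\langle \nabla\omega(x(k+1))-\nabla\omega(x(k)), x^\star-x(k+1)\rangle = D_\omega(x(k),x^\star) - D_\omega(x(k+1),x^\star) - D_\omega(x(k),x(k+1))$, this gives the basic per-step inequality
\[
\gamma(k)\bigl(\langle g_{\textup{ave}}(d(k)), x(k+1)-x^\star\rangle + \Psi(x(k+1)) - \Psi(x^\star)\bigr) \le D_\omega(x(k),x^\star) - D_\omega(x(k+1),x^\star) - D_\omega(x(k),x(k+1)).
\]
I would then split $g_{\textup{ave}}(d(k)) = \nabla f(x(d(k))) + \delta(k)$, handle the deterministic part with convexity of $f$ (comparing against $f(x^\star)$) and the descent lemma implied by Assumption~\ref{Assumption 2} (comparing $f(x(k+1))$ against $f(x(d(k)))$), which produces $-\tfrac{L}{2}\|x(k+1)-x(d(k))\|^2$, and bound $\|x(k+1)-x(d(k))\|^2 \le (\tau_{\max}+1)\sum_{j=d(k)}^{k}\|x(j+1)-x(j)\|^2$ by Cauchy--Schwarz, since at most $\tau_{\max}+1$ updates lie between $d(k)$ and $k+1$. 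The noise term I would write as $\langle\delta(k), x(k)-x^\star\rangle + \langle\delta(k), x(k+1)-x(k)\rangle$; the first summand is conditionally mean-zero given the history (the batch producing the $k$-th update is fresh), and the second I would control by Young's inequality as $\tfrac{1}{2\alpha(k)}\|\delta(k)\|_*^2 + \tfrac{\alpha(k)}{2}\|x(k+1)-x(k)\|^2$, with $\mathbb{E}\|\delta(k)\|_*^2 \le c\sigma^2/b$ — this is exactly where the constant $c$ of Theorem~\ref{Theorem 1} enters.

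Summing over $k=0,\dots,T-1$, the key bookkeeping is that each $\|x(j+1)-x(j)\|^2$ is charged at most $\tau_{\max}+1$ times by the smoothness terms (coefficient $\tfrac{L(\tau_{\max}+1)}{2}$ each) plus once by Young (coefficient $\tfrac{\alpha(j)}{2}$), totalling at most $\tfrac12\bigl(L(\tau_{\max}+1)^2+\alpha(j)\bigr) = \tfrac{1}{2\gamma(j)}$, which is precisely what the term $-\tfrac{1}{\gamma(j)}D_\omega(x(j),x(j+1)) \le -\tfrac{1}{2\gamma(j)}\|x(j+1)-x(j)\|^2$ (Remark~\ref{Remark 1} with $\mu_\omega=1$) from the prox step can absorb; this is the step-size choice $\gamma(k)^{-1}=L(\tau_{\max}+1)^2+\alpha(k)$ at work. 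Taking expectations, these difference-of-iterates terms cancel and one is left with
\[
\sum_{k=0}^{T-1}\bigl(\mathbb{E}[\phi(x(k+1))]-\phi^\star\bigr) \le \sum_{k=0}^{T-1}\frac{1}{\gamma(k)}\,\mathbb{E}\bigl[D_\omega(x(k),x^\star)-D_\omega(x(k+1),x^\star)\bigr] + \frac{c\sigma^2}{2b}\sum_{k=0}^{T-1}\frac{1}{\alpha(k)}.
\]

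The two remaining sums are where the compactness hypothesis and the exact form of $\alpha(k)$ are used. Because $\{1/\gamma(k)\}$ is nondecreasing, I would apply Abel summation together with the uniform bound $D_\omega(x(k),x^\star)\le R^2$ to obtain $\sum_{k=0}^{T-1}\tfrac{1}{\gamma(k)}\bigl(D_\omega(x(k),x^\star)-D_\omega(x(k+1),x^\star)\bigr) \le \tfrac{R^2}{\gamma(T-1)} = R^2\bigl(L(\tau_{\max}+1)^2 + \tfrac{\sigma\sqrt c\,\sqrt T}{R\sqrt b}\bigr)$; this is the step that genuinely requires $\textup{dom}\,\Psi$ bounded, and it is the part I expect to be the main obstacle to carry out cleanly, since for the constant step-size of Theorem~\ref{Theorem 1} this sum simply telescopes and no boundedness is needed. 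For the residual sum, $\sum_{k=0}^{T-1}\tfrac{1}{\alpha(k)} = \tfrac{R\sqrt b}{\sigma\sqrt c}\sum_{k=0}^{T-1}(k+1)^{-1/2} \le \tfrac{2R\sqrt b\,\sqrt T}{\sigma\sqrt c}$ using $\sum_{j=1}^{T}j^{-1/2}\le 2\sqrt T$, so $\tfrac{c\sigma^2}{2b}\sum_k \tfrac{1}{\alpha(k)} \le \tfrac{\sigma R\sqrt c\,\sqrt T}{\sqrt b}$. Adding the two contributions gives $\sum_{k=0}^{T-1}(\mathbb{E}[\phi(x(k+1))]-\phi^\star) \le LR^2(\tau_{\max}+1)^2 + \tfrac{2\sigma R\sqrt c\,\sqrt T}{\sqrt b}$. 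Finally, since $\phi$ is convex and $x_{\textup{ave}}(T)=\tfrac1T\sum_{k=1}^{T}x(k)$, Jensen's inequality yields $\mathbb{E}[\phi(x_{\textup{ave}}(T))]-\phi^\star \le \tfrac1T\sum_{k=1}^{T}\bigl(\mathbb{E}[\phi(x(k))]-\phi^\star\bigr)$, and dividing the previous display by $T$ gives the claimed bound.
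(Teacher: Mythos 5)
Your proposal is correct and follows essentially the same route as the paper: it reproduces the per-step estimate of Lemma~\ref{Lemma 1} (prox optimality plus the three-point identity, Young's inequality on the noise with weight $\alpha(k)$, and the $(\tau_{\max}+1)$-term Cauchy--Schwarz bound on the delay), absorbs the squared-difference terms exactly as in Lemma~\ref{Lemma 2} via the step-size choice $\gamma(k)^{-1}=L(\tau_{\max}+1)^2+\alpha(k)$, and then uses $D_\omega\le R^2$ together with $\sum_{k=0}^{T-1}(k+1)^{-1/2}\le 2\sqrt{T}$ and Jensen, just as in the paper's proof of Theorem~\ref{Theorem 2}. The only differences are cosmetic bookkeeping (charging each $\|x(j+1)-x(j)\|^2$ term individually instead of re-indexing the double sum, and an Abel-summation arrangement yielding $R^2/\gamma(T-1)$ rather than $R^2/\gamma(T)$), which if anything matches the stated constants slightly more cleanly; the isolated ``$-\tfrac{L}{2}\|x(k+1)-x(d(k))\|^2$'' sign in your narrative is a harmless slip, since your subsequent treatment handles it as the positive smoothness cost it is.
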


\begin{proof}
See Appendix~\ref{Theorem:2}.
\end{proof}

The time-varying step-size $\gamma(k)$, which ensures the convergence of the algorithm, consists of two terms: the time-varying term $\eta(k)$ should control the errors from stochastic gradient information while the role of the constant term ($L(\tau_{\max}+1)^2$) is to decrease the effects of asynchrony (bounded delays) on the convergence of the algorithm. According to Theorem~\ref{Theorem 2}, in the case that $\tau_{\max}=\mathcal{O}(T^{1/4})$, the delay becomes increasingly harmless as the algorithm progresses and the expected function value evaluated at ${x}_{\textup{ave}}(T)$ converges asymptotically at a rate $\mathcal{O}(1/\sqrt{T})$, which is known to be the best achievable rate of the mirror descent method for nonsmooth stochastic convex optimization problems~\cite{Nemirovski:09}.

For the special case of the optimization problem~\eqref{Optimization Problem 1} where $\Psi$ is restricted to be the indicator function of a compact convex set, Agarwal and Duchi~\cite[Theorem 2]{AgD:12} showed that the convergence rate of the delayed stochastic mirror descent method with time-varying step-size is
\begin{align*}
\mathcal{O}\left(\frac{L R^2+RG\tau_{\max}}{T}+\frac{\sigma R \sqrt{c}}{\sqrt{Tb}}+\frac{LR^2 G^2\tau^2_{\max} b \log T}{c\sigma^2 T}\right),
\end{align*}
where $G$ is the maximum bound on $\sqrt{\mathbb{E}[\|\nabla_x F(x,\xi)\|_*^2]}$. Comparing with this result, instead of a asymptotic penalty of the form $\mathcal{O}(\tau^2_{\max} \log T/T)$ due to the delays, we have the  penalty $\mathcal{O}(\tau^2_{\max}/T)$, which is much smaller for large $T$. Therefore, not only do we extend the result of~\cite{AgD:12} to  general regularization functions, but we also obtain a sharper guaranteed convergence rate than the one presented in~\cite{AgD:12}.

\subsection{Convergence Rate for Strongly Convex Regularization}
\label{sec:Strongly Convex Regularization}

In this subsection, we restrict our attention to stochastic composite optimization problems with strongly convex regularization terms. Specifically, we assume that $\Psi$ is $\mu_{\Psi}$-strongly convex with respect to $\|\cdot \|$, that is, for any $x,y\in\textup{dom}\; \Psi$,
\begin{align*}
\Psi(y)\geq \Psi(x)+\langle s,y-x\rangle+\frac{\mu_\Psi}{2}\|y-x \|^2,\quad \forall s\in\partial \Psi(x).
\end{align*}
Examples of the strongly convex function $\Psi$ include:
\begin{itemize}
\item $l_2$-\textit{regularization}: $\Psi(x)=(\rho/2) \|x\|_2^2$ with $\rho>0$.\vspace{0.2mm}
\item \textit{Elastic net regularization}: $\Psi(x)=\lambda \|x\|_1+(\rho/2) \|x\|_2^2$ with $\lambda>0$ and $\rho>0$.
\end{itemize}

\begin{remark}
The strong convexity of $\Psi$ implies that Problem~\eqref{Optimization Problem 1} has a unique minimizer $x^{\star}$~\cite[Corollary 11.16]{Bauschke:11}.
\end{remark}

In order to derive the convergence rate of Algorithm~\ref{Algorithm 1} for solving~\eqref{Optimization Problem 1} with a strongly convex regularization term, we need to assume that the Bregman distance function $D(x,y)$ used in the algorithm satisfies the next assumption.

\begin{assumption}[\textbf{\small Quadratic growth condition}\normalsize]
\label{Assumption 5}
For all $x,y\in\textup{dom}\; \Psi$, we have
\begin{align*}
D_{\omega}(x,y)\leq \frac{Q}{2}\| x-y\|^2,
\end{align*}
with $Q\geq \mu_\omega$.
\end{assumption}
For example, if $\omega(x)=\frac{1}{2}\| x\|_2^2$, then  $D_{\omega}(x,y)=\frac{1}{2}\| x-y\|_2^2$ and $Q=1$. Note that Assumption~\ref{Assumption 5} will automatically hold when the distance generating function $\omega$ has Lipschitz continuous gradient with a constant $Q$~\cite{Nedic:14}.

The associated convergence result now reads as follows.

\begin{theorem}
\label{Theorem 3}
\textit{
Suppose that the regularization function $\Psi$ is $\mu_{\Psi}$-strongly convex and that Assumptions~\ref{Assumption 2}--\ref{Assumption 5} hold. If $\{\gamma(k)\}_{k\in\mathbb{N}_0}$ is set to $\gamma(k)^{-1}=2L(\tau_{\max}+1)^2+\beta(k)$ with
\begin{align*}
\beta(k)=\frac{\mu_\Psi}{3Q}\bigl(k+\tau_{\max}+1\bigr),
\end{align*}
then the iterates produced by Algorithm~\ref{Algorithm 1} satisfies}
\begin{align*}
\mathbb{E}\bigl[\| x(T)&- x^\star\|^2\bigr]\leq \frac{2\left(\frac{6LQ}{\mu_\Psi}+1\right)^2(\tau_{\max}+1)^4}{(T+1)^2}D_\omega\bigl(x(0),x^\star\bigr)+\frac{18c\sigma^2 Q^2}{b\mu^2_\Psi (T+1)},
\end{align*}
\textit{for all $T\in\mathbb{N}$}.
\end{theorem}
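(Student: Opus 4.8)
The plan is to derive a one-step "prox-inequality" for the composite mirror-descent update, turn it into a scalar recursion for $\mathbb{E}\bigl[D_\omega(x(k),x^\star)\bigr]$, and unroll that recursion with the prescribed step-size. I start from the first-order optimality ("three-point") property of the Bregman proximal step~\eqref{Async. Iteration}: since $x(k+1)$ minimizes $\langle g_{\textup{ave}}(d(k)),z\rangle+\Psi(z)+\gamma(k)^{-1}D_\omega(x(k),z)$ and $\Psi$ is $\mu_\Psi$-strongly convex, for every $z\in\textup{dom}\,\Psi$ one has
\[
\langle g_{\textup{ave}}(d(k)),x(k+1)-z\rangle+\Psi(x(k+1))-\Psi(z)+\tfrac{\mu_\Psi}{2}\|x(k+1)-z\|^2\le\tfrac{1}{\gamma(k)}\bigl(D_\omega(x(k),z)-D_\omega(x(k+1),z)-D_\omega(x(k),x(k+1))\bigr).
\]
Put $z=x^\star$ and split $g_{\textup{ave}}(d(k))=\nabla f(x(d(k)))+e(k)$, where $e(k)$ is the mini-batch averaging error, zero-mean conditioned on the history before round $k$ and with $\mathbb{E}\|e(k)\|_*^2\le c\sigma^2/b$ (Assumption~\ref{Assumption 3} plus the averaging, which is where $c$ enters). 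The Lipschitz-gradient descent lemma for $f$ gives $\langle\nabla f(x(d(k))),x(k+1)-x(d(k))\rangle\ge f(x(k+1))-f(x(d(k)))-\tfrac{L}{2}\|x(k+1)-x(d(k))\|^2$, and convexity of $f$ gives $\langle\nabla f(x(d(k))),x(d(k))-x^\star\rangle\ge f(x(d(k)))-f(x^\star)$; adding these and recalling $\phi=f+\Psi$ converts the optimality inequality into one whose left side contains $\phi(x(k+1))-\phi^\star+\tfrac{\mu_\Psi}{2}\|x(k+1)-x^\star\|^2+\gamma(k)^{-1}D_\omega(x(k),x(k+1))$ and whose right side is $\gamma(k)^{-1}\bigl(D_\omega(x(k),x^\star)-D_\omega(x(k+1),x^\star)\bigr)+\tfrac{L}{2}\|x(k+1)-x(d(k))\|^2-\langle e(k),x(k+1)-x^\star\rangle$. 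Strong convexity of $\phi$ ($\phi(x(k+1))-\phi^\star\ge\tfrac{\mu_\Psi}{2}\|x(k+1)-x^\star\|^2$) together with the quadratic-growth bound $\mu_\Psi\|x(k+1)-x^\star\|^2\ge\tfrac{2\mu_\Psi}{Q}D_\omega(x(k+1),x^\star)$ (Assumption~\ref{Assumption 5}, with $\mu_\omega=1$) pushes the left side towards a recursion in $D_\omega(\cdot,x^\star)$, while the term $\gamma(k)^{-1}D_\omega(x(k),x(k+1))\ge\tfrac{1}{2\gamma(k)}\|x(k+1)-x(k)\|^2$ will pay for the two remaining "bad" terms.

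Those bad terms are the heart of the argument. For the delay, write $x(k+1)-x(d(k))=\sum_{j=d(k)}^{k}(x(j+1)-x(j))$, so that $\|x(k+1)-x(d(k))\|^2\le(\tau_{\max}+1)\sum_{j=d(k)}^{k}\|x(j+1)-x(j)\|^2$, and observe that a given increment $\|x(j+1)-x(j)\|^2$ is charged for at most $\tau_{\max}+1$ indices $k$ (those with $j\le k\le j+\tau_{\max}$), i.e.\ at most $(\tau_{\max}+1)^2$ times overall. The rule $\gamma(k)^{-1}=2L(\tau_{\max}+1)^2+\beta(k)$ makes $\tfrac{1}{2\gamma(k)}\|x(k+1)-x(k)\|^2\ge L(\tau_{\max}+1)^2\|x(k+1)-x(k)\|^2$ large enough to cover all those delay charges while still leaving a reserve of order $\beta(k)\|x(k+1)-x(k)\|^2$. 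After conditioning, only the piece $\langle e(k),x(k+1)-x(k)\rangle$ of $\langle e(k),x(k+1)-x^\star\rangle$ fails to vanish in expectation, and Young's inequality with weight $\beta(k)$ absorbs it into that reserve at the cost of a residual $\le c\sigma^2/(b\beta(k))$. Because the delay term couples step $k$ to the preceding $\tau_{\max}$ steps, I would make this rigorous either by summing over $k$ and re-indexing, or — to keep a last-iterate statement — by carrying a Lyapunov potential equal to $D_\omega(x(k),x^\star)$ plus a weighted sum of the $\tau_{\max}$ most recent iterate increments, tuned so the per-step decrease remains self-financing. Either route yields the scalar recursion $\bigl(\gamma(k)^{-1}+\tfrac{2\mu_\Psi}{Q}\bigr)\mathbb{E}[D_\omega(x(k+1),x^\star)]\le\gamma(k)^{-1}\mathbb{E}[D_\omega(x(k),x^\star)]+c\sigma^2/(b\beta(k))$.

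To finish, write $s_k:=\gamma(k)^{-1}=a(p+k)$ with $a=\mu_\Psi/(3Q)$ and $p=\tfrac{6LQ}{\mu_\Psi}(\tau_{\max}+1)^2+\tau_{\max}+1$; the choice $\beta(k)=a(k+\tau_{\max}+1)$ is exactly the one making $s_k+\tfrac{2\mu_\Psi}{Q}=s_{k+6}$, so unrolling the recursion gives $\mathbb{E}[D_\omega(x(T),x^\star)]\le\bigl(\prod_{k=0}^{T-1}\tfrac{s_k}{s_{k+6}}\bigr)D_\omega(x(0),x^\star)+\sum_{k=0}^{T-1}\bigl(\prod_{l=k+1}^{T-1}\tfrac{s_l}{s_{l+6}}\bigr)\tfrac{c\sigma^2}{b\beta(k)s_{k+6}}$. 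The homogeneous factor telescopes to the rising-factorial ratio $\tfrac{s_0 s_1\cdots s_5}{s_T s_{T+1}\cdots s_{T+5}}=\prod_{i=0}^5\tfrac{p+i}{p+T+i}$, which — using $p\le(\tfrac{6LQ}{\mu_\Psi}+1)(\tau_{\max}+1)^2$ in the numerator and $p\ge1$ (hence $p+T+i\ge T+1+i$) in the denominator — is bounded by $\tfrac{(\frac{6LQ}{\mu_\Psi}+1)^2(\tau_{\max}+1)^4}{(T+1)^2}$; the same factorial cancellations collapse the noise sum to a term of order $\tfrac{c\sigma^2 Q^2}{b\mu_\Psi^2(T+1)}$ with the stated constant. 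One last application of $\|x(T)-x^\star\|^2\le 2D_\omega(x(T),x^\star)$ (Remark~\ref{Remark 1}, $\mu_\omega=1$) converts the bound on $\mathbb{E}[D_\omega(x(T),x^\star)]$ into the claimed bound on $\mathbb{E}[\|x(T)-x^\star\|^2]$ and supplies the overall factor $2$.

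I expect the delay bookkeeping of the second paragraph to be the main obstacle: absorbing $\tfrac{L}{2}\|x(k+1)-x(d(k))\|^2$ without destroying the last-iterate recursion, and doing so sharply enough that inflating $\gamma(k)^{-1}$ merely by $2L(\tau_{\max}+1)^2$ — rather than by a larger multiple of $\tau_{\max}$ — still closes the accounting, which is precisely what forces the $(\tau_{\max}+1)^4$ power (and not a worse one) in the rate. The recursion-unrolling is then mechanical but needs care to extract the clean $1/(T+1)^2$ and $1/(T+1)$ dependence — and, in particular, no spurious $\log T$ factor — from the telescoping products.
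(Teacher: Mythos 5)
Your overall architecture is sound and close in spirit to the paper's: the one-step prox inequality with the delay term expanded over increments, the zero-mean/Young treatment of the mini-batch error with residual $c\sigma^2/b$, and the final conversion $\|x(T)-x^\star\|^2\le 2D_\omega(x(T),x^\star)$ all match Lemma~\ref{Lemma 1} and its use in Appendix~\ref{Theorem:3}. Your unrolling device — the identity $\gamma(k)^{-1}+2\mu_\Psi/Q=\gamma(k+6)^{-1}$ and the rising-factorial telescoping — is a genuinely different (and elegant) alternative to the paper's trick of multiplying the one-step bound by $\gamma(k)^{-1}$ and exploiting $\gamma(k+1)^{-2}\le\gamma(k)^{-1}\bigl(\gamma(k)^{-1}+\mu_\Psi/Q\bigr)$ so that $\gamma(k)^{-2}D_\omega(x(k),x^\star)$ telescopes. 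However, the step on which your entire unrolling rests — the clean per-step recursion $\bigl(\gamma(k)^{-1}+\tfrac{2\mu_\Psi}{Q}\bigr)\mathbb{E}[D_\omega(x(k+1),x^\star)]\le\gamma(k)^{-1}\mathbb{E}[D_\omega(x(k),x^\star)]+c\sigma^2/(b\beta(k))$ — is asserted, not established, and you yourself flag the delay bookkeeping as the main obstacle. The gap is real: the term $\tfrac{L}{2}\|x(k+1)-x(d(k))\|^2$ charges increments from up to $\tau_{\max}$ \emph{earlier} steps, so it cannot be cancelled within the single step $k$. Of your two proposed fixes, the ``sum over $k$ and re-index'' route does \emph{not} rescue the product unrolling: once you sum, the best you can extract from $\gamma(k+6)^{-1}D_{k+1}\le\gamma(k)^{-1}D_k+\epsilon_k$ is $\gamma(T)^{-1}D_T\le\gamma(0)^{-1}D_0+\sum_k\epsilon_k$, i.e.\ an $\mathcal{O}\bigl(D_0/T+(\log T)/T\bigr)$ bound, strictly weaker than the theorem. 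The Lyapunov-potential route (adding weighted recent increments to $D_\omega$) could in principle deliver a per-step contraction, but making the weights self-financing against the prescribed reserve $\tfrac{1}{2\gamma(k)}\|x(k+1)-x(k)\|^2$ under the multiplicative unrolling is exactly the unproven crux; the paper instead resolves it inside a \emph{weighted sum}, using monotonicity of $\gamma$, the bound $\gamma(k)^{-1}\ge 2L(\tau_{\max}+1)^2+\mu_\Psi\tau_{\max}/(3Q)$, and $\gamma(k+\tau_{\max})^{-1}\le\bigl(1+\tfrac{\mu_\Psi\tau_{\max}}{6LQ(\tau_{\max}+1)^2}\bigr)\gamma(k)^{-1}$ to absorb the $\gamma(k)^{-1}$-weighted delay charges into $\tfrac14\sum_k\gamma(k)^{-2}\|x(k+1)-x(k)\|^2$, arriving directly at $\gamma(T)^{-2}\mathbb{E}[D_\omega(x(T),x^\star)]\le c\sigma^2T/b+\gamma(0)^{-2}D_\omega(x(0),x^\star)$.

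Two smaller points. First, your constant extraction from the homogeneous factor is not yet a proof: keeping two of the six factors and using $p\ge1$ gives $\tfrac{p(p+1)}{(T+1)(T+2)}$, which is not obviously $\le p^2/(T+1)^2$ (that comparison fails unless $p\ge T+1$); numerically the claimed bound $\prod_{i=0}^{5}\tfrac{p+i}{p+T+i}\le p^2/(T+1)^2$ does appear to hold, but you need to exploit more of the six factors to prove it, and similarly the noise-sum constant ($\le 18c\sigma^2Q^2/(b\mu_\Psi^2(T+1))$ after the factor $2$) needs to be checked rather than declared. Second, note that you extract the strong-convexity gain twice (from the prox step and from $\phi(x(k+1))-\phi^\star\ge\tfrac{\mu_\Psi}{2}\|x(k+1)-x^\star\|^2$), giving $2\mu_\Psi/Q$; this is legitimate and is what makes the shift-by-$6$ identity exact, but it is another place where your accounting differs from Lemma~\ref{Lemma 1} (which keeps only $\mu_\Psi/Q$), so you cannot simply cite that lemma — you must rederive the one-step bound with both gains while still reserving enough of $\tfrac{1}{2\gamma(k)}\|x(k+1)-x(k)\|^2$ for the delay charges and the Young step. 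In short: correct skeleton and an interesting alternative unrolling, but the delay-absorption step that justifies the per-step recursion is missing, and without it (or the paper's weighted-sum substitute) the claimed $(\tau_{\max}+1)^4/T^2$ plus $1/T$ rate is not yet proved.
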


\begin{proof}
See Appendix~\ref{Theorem:3}.
\end{proof}

An interesting point regarding Theorem~\ref{Theorem 3} is that for solving stochastic composite optimization problems with strongly convex regularization functions, the maximum delay bound $\tau_{\max}$ can be as large as $\mathcal{O}(T^{1/4})$ without affecting the asymptotic convergence rate of Algorithm~\ref{Algorithm 1}. In this case, our asynchronous mini-batch algorithm converges asymptotically at a rate of $\mathcal{O}(1 / T)$, which matches the best known rate achievable in a serial setting.

% =============================================================================================================================
%
% Experimental Results
%
% =============================================================================================================================

\section{Experimental Results}
\label{sec:Simulation}

We have developed a complete master-worker implementation of our algorithm in C/++ using the Massage Passing
Interface libraries (OpenMPI). Although we argued in Section~\ref{sec:Algorithm} that Algorithm~\ref{Algorithm 1} can
be implemented using atomic operations on shared-memory computing architectures, we have chosen the MPI
implementation due to its flexibility in scaling the problem to distributed-memory environments.

We evaluated our algorithm on a document classification problem using the text categorization dataset \texttt{rcv1}~\cite{Lewis:04}. This dataset consists of $m \approx 800000$ documents, with $n\approx 50000$ unique stemmed tokens spanning 103 topics. Out of these topics, we decided to classify sports-related documents. To this end, we trained a sparse (binary) classifier by solving the following  $l_1$-regularized logistic regression problem
\begin{align*}
& \underset{x}{\textup{minimize}} \hspace{1cm}  \mathbb{E}_{\lbrace \left(a_i, b_i\right) \rbrace}
\left[ \log \left( 1 + \exp\left( -b_i \langle a_i, x \rangle \right) \right) \right]
+ \lambda \|x\|_1 \\
& \textup{subject to}  \hspace{1cm}  \|x\|_{2} \leq R\,.
\end{align*}
Here, $a_i \in \mathbb{R}^{n}$ is the sparse vector of token weights assigned to each document, and $b_i \in \lbrace
-1, 1 \rbrace$ indicates whether a selected document is sports-related, or not ($b_i$ is $1$ if the document is about sport, $-1$ otherwise). To evaluate scalability, we used both
the training and test sets available when solving the optimization problem. We  implemented Algorithm~\ref{Algorithm 1}
with time-varying step-sizes, and used a batch size of 1000 documents. The regularization parameter was set to $\lambda = 0.01$, and the algorithm was run until a fixed tolerance $\epsilon$ was met.

Figure~\ref{fig:speedup} presents the achieved relative speedup of the algorithm with respect to the number
of workers used. The relative speedup of the algorithm on $p$ processors is defined as $S(p) = t_{1}/t_{p}$,
where $t_{1}$ and $t_{p}$ are the time it takes to run the corresponding algorithm (to $\epsilon$-accuracy) on 1 and $p$ processing units, respectively. We observe a near-linear relative speedup, consistent with our theoretical results. The timings are averaged over 10 Monte Carlo runs.
\begin{figure}[!ht]
    \centering
    \includegraphics[width=3in]{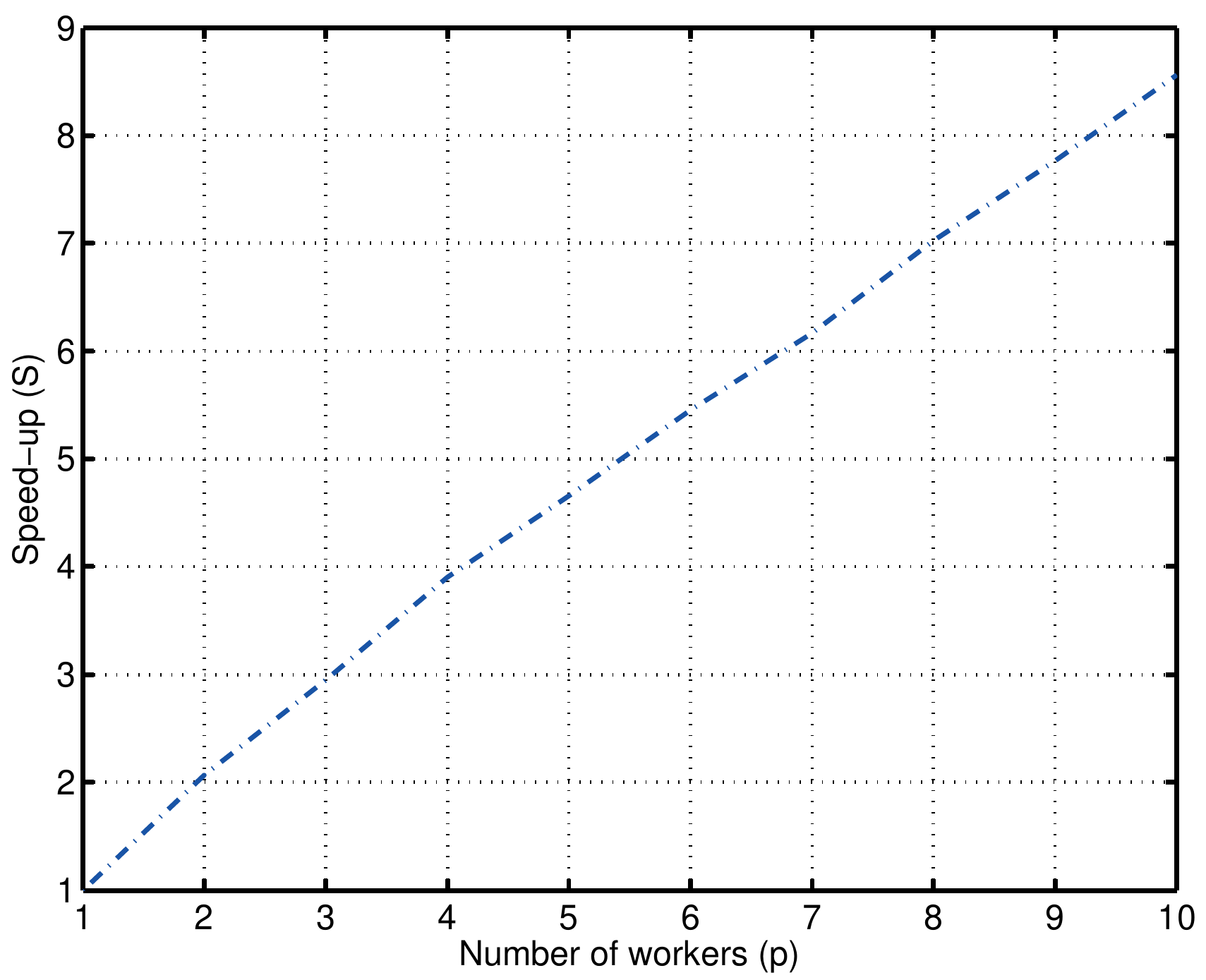}
    \caption{Speedup of Algorithm~\ref{Algorithm 1} with respect to the number of workers.}
    \label{fig:speedup}
\end{figure}

% =============================================================================================================================
%
% Conclusions
%
% =============================================================================================================================

\section{Conclusions}
\label{sec:Conclusions}

We have proposed an asynchronous mini-batch algorithm that exploits multiple processors to solve regularized stochastic optimization problems with smooth loss functions. We have established that for closed and convex constraint sets, the iteration complexity of the algorithm with constant step-sizes is asymptotically $\mathcal{O}(1\slash\epsilon^2)$. For compact constraint sets, we have proved that the running average of the iterates generated by our algorithm with time-varying step-size converges to the optimum at a rate $\mathcal{O}(1/\sqrt{T})$. When the regularization function is strongly convex and the constraint set is closed and convex, the algorithm achieves the rate of the order $\mathcal{O}(1/T)$. We have shown that the penalty in convergence rate of the algorithm due to asynchrony is asymptotically negligible and a near-linear speedup in the number of processors can be expected. Our computational experience confirmed the theory.

% =============================================================================================================================
%
% Appendix
%
% =============================================================================================================================

\appendix
In this section, we prove the main results of the paper, namely, Theorems~\ref{Theorem 1}--\ref{Theorem 3}. We first state three key lemmas which are instrumental in our argument.

% =============================================================================================================================
%
% Useful Lemmas
%
% =============================================================================================================================

The following result establishes an important recursion for the iterates generated by Algorithm~\ref{Algorithm 1}.

\begin{lemma}
\label{Lemma 1}
Suppose Assumptions~\ref{Assumption 1}--\ref{Assumption 4} hold. Then, the iterates $\{x(k)\}_{k\in\mathbb{N}_0}$ generated by Algorithm~\ref{Algorithm 1} satisfy
\small
\begin{align}
\phi\bigl(x(k+1)\bigr)-\phi^\star+\frac{1}{\gamma(k)}D_\omega\bigl(x(k+1),x^\star\bigr)&\leq \frac{1}{2\eta(k)}\bigl\|e\bigl(d(k)\bigr)\bigr\|^2_*\nonumber\\
&\hspace{0.5cm}+\bigl\langle e\bigl(d(k)\bigr),x(k)-x^\star \bigr\rangle+\frac{1}{\gamma(k)}D_\omega\bigl(x(k),x^\star\bigr)\nonumber\\
&\hspace{0.5cm}+\frac{L(\tau_{\max}+1)}{2}\sum_{j=0}^{\tau_{\max}} \bigl\|x(k-j)-x(k-j+1)\bigr\|^2\nonumber\\
&\hspace{0.5cm}-\frac{1}{2}\left(\frac{1}{\gamma(k)}-\eta(k)\right)\|x(k+1)-x(k) \|^2\nonumber\\
&\hspace{0.5cm}-\frac{\mu_\Psi}{2}\bigl\|x(k+1)- x^\star)\bigr\|^2,
\label{Lemma 1-1}
\end{align}
\normalsize
where $x^{\star}\in X^{\star}$, $\{\eta(k)\}$ is a sequence of strictly positive numbers, and $e(k):=\nabla f(x(k))-{g}_{\textup{ave}}(k)$ is the error in the gradient estimate.
\end{lemma}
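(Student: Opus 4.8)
The plan is to run the standard composite mirror-descent argument, modified to handle the stale gradient $g_{\textup{ave}}(d(k))$ that is evaluated at the delayed iterate $x(d(k))$ rather than at $x(k)$. First I would write down the first-order optimality condition for the proximal update~\eqref{Async. Iteration}: since $x(k+1)$ minimizes the sum of a linear term, $\Psi$ (whose effective domain is closed by Assumption~\ref{Assumption 4}), and $\frac{1}{\gamma(k)}D_\omega(x(k),\cdot)$, there is a subgradient $s\in\partial\Psi(x(k+1))$ with
\begin{align*}
\Bigl\langle g_{\textup{ave}}(d(k))+s+\tfrac{1}{\gamma(k)}\bigl(\nabla\omega(x(k+1))-\nabla\omega(x(k))\bigr),\,z-x(k+1)\Bigr\rangle\ge 0,\qquad \forall z\in\textup{dom}\,\Psi .
\end{align*}
Setting $z=x^\star$ and applying the three-point identity $\langle\nabla\omega(x(k+1))-\nabla\omega(x(k)),x(k+1)-x^\star\rangle = D_\omega(x(k+1),x^\star)+D_\omega(x(k),x(k+1))-D_\omega(x(k),x^\star)$ turns this into
\begin{align*}
\langle g_{\textup{ave}}(d(k))+s,\,x(k+1)-x^\star\rangle \le \tfrac{1}{\gamma(k)}\bigl(D_\omega(x(k),x^\star)-D_\omega(x(k+1),x^\star)-D_\omega(x(k),x(k+1))\bigr).
\end{align*}

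Next I would lower-bound the left-hand side term by term. For the regularizer, $\mu_\Psi$-strong convexity of $\Psi$ at $x(k+1)$ gives $\langle s,x(k+1)-x^\star\rangle\ge\Psi(x(k+1))-\Psi(x^\star)+\tfrac{\mu_\Psi}{2}\|x(k+1)-x^\star\|^2$ (with $\mu_\Psi=0$ in the merely-convex case covered by Theorems~\ref{Theorem 1}--\ref{Theorem 2}). For the gradient, I would write $g_{\textup{ave}}(d(k))=\nabla f(x(d(k)))-e(d(k))$ and bound $\langle\nabla f(x(d(k))),x(k+1)-x^\star\rangle$ by combining the descent lemma for $f$ (which has $L$-Lipschitz gradient by Assumption~\ref{Assumption 2}), namely $f(x(k+1))\le f(x(d(k)))+\langle\nabla f(x(d(k))),x(k+1)-x(d(k))\rangle+\tfrac{L}{2}\|x(k+1)-x(d(k))\|^2$, with convexity of $f$, namely $f(x(d(k)))\le f(x^\star)+\langle\nabla f(x(d(k))),x(d(k))-x^\star\rangle$; adding these yields $\langle\nabla f(x(d(k))),x(k+1)-x^\star\rangle\ge f(x(k+1))-f(x^\star)-\tfrac{L}{2}\|x(k+1)-x(d(k))\|^2$. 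Substituting everything and using $f+\Psi=\phi$ converts the displayed inequality into a bound on $\phi(x(k+1))-\phi^\star+\tfrac{1}{\gamma(k)}D_\omega(x(k+1),x^\star)$ whose right-hand side still carries the cross term $\langle e(d(k)),x(k+1)-x^\star\rangle$, the Bregman term $-\tfrac{1}{\gamma(k)}D_\omega(x(k),x(k+1))$, the delay term $\tfrac{L}{2}\|x(k+1)-x(d(k))\|^2$, and $-\tfrac{\mu_\Psi}{2}\|x(k+1)-x^\star\|^2$.

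The last step is to reshape the first three of these. I would split $\langle e(d(k)),x(k+1)-x^\star\rangle=\langle e(d(k)),x(k)-x^\star\rangle+\langle e(d(k)),x(k+1)-x(k)\rangle$ and apply Fenchel--Young to the second piece, $\langle e(d(k)),x(k+1)-x(k)\rangle\le\tfrac{1}{2\eta(k)}\|e(d(k))\|_*^2+\tfrac{\eta(k)}{2}\|x(k+1)-x(k)\|^2$; use $\mu_\omega=1$-strong convexity of $\omega$ (Remark~\ref{Remark 1}) to get $-\tfrac{1}{\gamma(k)}D_\omega(x(k),x(k+1))\le-\tfrac{1}{2\gamma(k)}\|x(k+1)-x(k)\|^2$, so that the two $\|x(k+1)-x(k)\|^2$ contributions combine into $-\tfrac12\bigl(\tfrac{1}{\gamma(k)}-\eta(k)\bigr)\|x(k+1)-x(k)\|^2$; and, finally, expand $x(k+1)-x(d(k))=\sum_{i=d(k)}^{k}\bigl(x(i+1)-x(i)\bigr)$, a sum of at most $\tau(k)+1\le\tau_{\max}+1$ increments, so that Cauchy--Schwarz ($\|\sum_{i=1}^n v_i\|^2\le n\sum_{i=1}^n\|v_i\|^2$) gives $\tfrac{L}{2}\|x(k+1)-x(d(k))\|^2\le\tfrac{L(\tau_{\max}+1)}{2}\sum_{j=0}^{\tau_{\max}}\|x(k-j)-x(k-j+1)\|^2$ (padding the sum with zero increments, under the convention $x(\ell)=x(0)$ for $\ell<0$, when $k<\tau_{\max}$). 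Collecting the resulting terms gives exactly~\eqref{Lemma 1-1}.

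The main obstacle is precisely this delay bookkeeping: unlike in the synchronous setting, the gradient is taken at $x(d(k))$, so one must bridge $f(x(k+1))$ and $f(x(d(k)))$ through the descent lemma and then control the leftover $\|x(k+1)-x(d(k))\|^2$ by unrolling it into consecutive increments --- this is what produces the extra $\tfrac{L(\tau_{\max}+1)}{2}\sum_{j=0}^{\tau_{\max}}\|x(k-j)-x(k-j+1)\|^2$ term and, downstream, forces the $(\tau_{\max}+1)^2$ restriction on the step-size in Theorem~\ref{Theorem 1}. Everything else is routine Fenchel--Young and three-point-identity manipulation familiar from composite mirror descent.
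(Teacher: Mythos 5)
Your proposal is correct and follows essentially the same route as the paper's own proof: first-order optimality of the proximal step combined with the three-point identity and strong convexity of $\Psi$, the descent lemma plus convexity of $f$ at the delayed point $x(d(k))$, Fenchel--Young on the $\langle e(d(k)),x(k+1)-x(k)\rangle$ piece, $1$-strong convexity of $\omega$, and unrolling $x(k+1)-x(d(k))$ into at most $\tau(k)+1\le\tau_{\max}+1$ increments bounded via $\|\sum_i v_i\|^2\le n\sum_i\|v_i\|^2$. Setting $z=x^\star$ at the start rather than at the end, as the paper does, is an immaterial difference.
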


\begin{proof}
We start with the first-order optimality condition for the point $x(k+1)$ in the minimization problem~\eqref{Async. Iteration}: there exists subgradient $s(k+1)\in\partial \Psi(x(k+1))$ such that for all $z\in \textup{dom}\; \Psi$, we have
\begin{align*}
\biggl\langle {g}_{\textup{ave}}\bigl(d(k)\bigr)+s(k+1)+\frac{1}{\gamma(k)}\nabla_{(2)} D_\omega\bigl(x(k),x(k+1)\bigr),z-x(k+1) \biggr\rangle \geq 0,
\end{align*}
where $\nabla_{(2)} D_\omega(\cdot,\cdot)$ denotes the partial derivative of the Bregman distance function with respect to the second variable. Plugging the following equality
\begin{align*}
\nabla_{(2)} D_\omega\bigl(x(k),x(k+1)\bigr)=\nabla \omega \bigl(x(k+1)\bigr)-\nabla \omega \bigl(x(k)\bigr),
\end{align*}
into the previous inequality and re-arranging terms gives
\small
\begin{align}
\frac{1}{\gamma(k)}\biggl\langle \nabla\omega \bigl(x(k)\bigr)-\nabla \omega \bigl(x(k+1)\bigr),z-x(k+1)\biggr \rangle&\leq \biggl\langle {g}_{\textup{ave}}\bigl(d(k)\bigr)+s(k+1),z-x(k+1) \biggr\rangle\nonumber\\
&= \biggl\langle {g}_{\textup{ave}}\bigl(d(k)\bigr),z-x(k+1) \biggr\rangle\nonumber\\
&\hspace{0.5cm}+\biggl\langle s(k+1),z-x(k+1) \biggr\rangle\nonumber\\
&\leq \biggl\langle {g}_{\textup{ave}}\bigl(d(k)\bigr),z-x(k+1) \biggr\rangle\nonumber\\
&\hspace{0.5cm}+\Psi(z)-\Psi(x(k+1))-\frac{\mu_\Psi}{2}\bigl\|z-x(k+1)\bigr\|^2,
\label{Proof 1}
\end{align}
\normalsize
where the last inequality used
\begin{align*}
\Psi(z)\geq \Psi\bigl(x(k+1)\bigr)+\bigl\langle s(k+1),z-x(k+1)\bigr\rangle+\frac{\mu_\Psi}{2}\bigl\|z-x(k+1)\bigr\|^2,
\end{align*}
by the (strong) convexity of $\Psi$. We now use the following well-known \textit{three point identity} of the Bregman distance function~\cite{Chen:93} to rewrite the left-hand side of~\eqref{Proof 1}:
\begin{align*}
\bigl\langle \nabla \omega(a)-\nabla \omega(b),c-b\bigr\rangle=D_\omega(a,b)-D_\omega(a,c)+D_\omega(b,c).
\end{align*}
From this relation, with $a=x(k)$, $b=x(k+1)$, and $c=z$, we have
\small
\begin{align*}
\biggl\langle \nabla\omega \bigl(x(k)\bigr)&-\nabla \omega \bigl(x(k+1)\bigr),z-x(k+1)\biggr \rangle=D_\omega\bigl(x(k),x(k+1)\bigr)-D_\omega\bigl(x(k),z\bigr)+D_\omega\bigl(x(k+1),z\bigr).
\end{align*}
\normalsize
Substituting the preceding equality into~\eqref{Proof 1} and re-arranging terms result in
\small
\begin{align*}
\Psi(x(k+1))-\Psi(z)+\frac{1}{\gamma(k)}D_\omega\bigl(x(k+1),z\bigr)&\leq \biggl\langle {g}_{\textup{ave}}\bigl(d(k)\bigr),z-x(k+1) \biggr\rangle+\frac{1}{\gamma(k)}D_\omega\bigl(x(k),z\bigr)\\
&\hspace{0.5cm}-\frac{1}{\gamma(k)}D_\omega\bigl(x(k),x(k+1)\bigr)-\frac{\mu_\Psi}{2}\bigl\|z-x(k+1)\bigr\|^2.
\end{align*}
\normalsize
Since the distance generating function $\omega(x)$ is $1$-strongly convex, we have the lower bound
\begin{align*}
D_{\omega}\bigl(x(k),x(k+1)\bigr)\geq \frac{1}{2}\|x(k+1)-x(k) \|^2,
\end{align*}
which implies that
\small
\begin{align}
\Psi(x(k+1))-\Psi(z)+\frac{1}{\gamma(k)}D_\omega\bigl(x(k+1),z\bigr)&\leq \biggl\langle {g}_{\textup{ave}}\bigl(d(k)\bigr),z-x(k+1) \biggr\rangle+\frac{1}{\gamma(k)}D_\omega\bigl(x(k),z\bigr)\nonumber\\
&\hspace{0.5cm}-\frac{1}{2\gamma(k)}\|x(k+1) - x(k)\|^2-\frac{\mu_\Psi}{2}\bigl\|z-x(k+1)\bigr\|^2.
\label{Proof 2}
\end{align}
\normalsize

The essential idea in the rest of the proof is to use convexity and smoothness of the expectation function $f$ to bound $f(x(k+1)-f(z)$ for each $z\in \textup{dom}\; \Psi$ and each $k\in \mathbb{N}_0$.  According to Assumption~\ref{Assumption 2}, $\nabla F(x,\xi)$ and, hence, $\nabla f(x)$ are Lipschitz continuous with the constant $L$. By using the $L$-Lipschitz continuity of $\nabla f$ and then the convexity of $f$, we have
\begin{align}
f(x(k+1))&\leq f(x(d(k)))+\langle \nabla f(x(d(k))), x(k+1)-x(d(k))\rangle+\frac{L}{2}\|x(k+1)-x(d(k)) \|^2\nonumber\\
&\leq f\bigl(z\bigr)+\bigl\langle \nabla f\bigl(x(d(k))\bigr), x(k+1)-z\bigr \rangle+\frac{L}{2}\bigl\|x(k+1)-x(d(k)) \bigr\|^2,
\label{Proof 3}
\end{align}
for any $z\in \textup{dom}\; \Psi$. Combining inequalities~\eqref{Proof 2} and~\eqref{Proof 3}, and recalling that $\phi(x)=f(x)+\Psi(x)$, we obtain
\small
\begin{align*}
\phi(x(k+1))-\phi(z)+\frac{1}{\gamma(k)}D_\omega\bigl(x(k+1),z\bigr)&\leq \bigl\langle \nabla f\bigl(x(d(k))\bigr)-{g}_{\textup{ave}}\bigl(d(k)\bigr),x(k+1)-z \bigr\rangle+\frac{1}{\gamma(k)}D_\omega\bigl(x(k),z\bigr)\nonumber\\
&\hspace{0.5cm}-\frac{1}{2\gamma(k)}\|x(k+1)-x(k) \|^2-\frac{\mu_\Psi}{2}\bigl\|z-x(k+1)\bigr\|^2\nonumber\\
&\hspace{0.5cm}+\frac{L}{2}\|x(k+1)-x(d(k)) \|^2.\nonumber
\end{align*}
\normalsize
We now rewrite the above inequality in terms of the error $e(d(k))=\nabla f(x(d(k)))-{g}_{\textup{ave}}(d(k))$ as follows:
\small
\begin{align}
\phi(x(k+1))-\phi(z)+\frac{1}{\gamma(k)}D_\omega\bigl(x(k+1),z\bigr)&\leq \bigl\langle e\bigl(d(k)\bigr),x(k+1)-z \bigr\rangle+\frac{1}{\gamma(k)}D_\omega\bigl(x(k),z\bigr)\nonumber\\
&\hspace{0.5cm}-\frac{1}{2\gamma(k)}\|x(k+1)- x(k) \|^2-\frac{\mu_\Psi}{2}\bigl\|z-x(k+1)\bigr\|^2\nonumber\\
&\hspace{0.5cm}+\frac{L}{2}\|x(k+1)-x(d(k)) \|^2\nonumber\\
&=\underbrace{\bigl\langle e\bigl(d(k)\bigr),x(k+1)-x(k) \bigr\rangle}_{U_1}\nonumber\\
&\hspace{0.5cm}+\bigl\langle e\bigl(d(k)\bigr),x(k)-z \bigr\rangle+\frac{1}{\gamma(k)}D_\omega\bigl(x(k),z\bigr)\nonumber\\
&\hspace{0.5cm}-\frac{1}{2\gamma(k)}\|x(k+1) - x(k)\|^2-\frac{\mu_\Psi}{2}\bigl\|z-x(k+1)\bigr\|^2\nonumber\\
&\hspace{0.5cm}+\frac{L}{2}\underbrace{\|x(k+1)-x(d(k)) \|^2}_{U_2}.
\label{Proof 4}
\end{align}
\normalsize
We will seek upper bounds on the quantities $U_1$ and $U_2$. Let $\{\eta(k)\}_{k\in\mathbb{N}_0}$ be a sequence of positive numbers. For $U_1$, we have
\begin{align}
U_1&\leq \left|\left\langle\frac{1}{\sqrt{\eta(k)}}e\bigl(d(k)\bigr),\sqrt{\eta(k)} \bigl(x(k+1)-x(k)\bigr)\right\rangle\right|\nonumber\\
&\leq \frac{1}{2\eta(k)}\bigl\|e\bigl(d(k)\bigr)\bigr\|^2_*+\frac{{\eta(k)}}{2}\bigl\|x(k+1)-x(k) \bigr\|^2,
\label{Proof 5}
\end{align}
where the second inequality follows from the Fenchel-Young inequality applied to the conjugate pair $\frac{1}{2}\|\cdot\|^2$ and $\frac{1}{2}\|\cdot\|_*^2$, \textit{i.e.},
\begin{align*}
\bigl|\langle a,b\rangle \bigr|\leq \frac{1}{2}\bigl\|a\bigr\|_*^2+\frac{1}{2}\bigl\|b\bigr\|^2.
\end{align*}
We now turn to $U_2$. It follows from definition $\tau(k)=k-d(k)$ that
\begin{align*}
U_2&=\bigl(k-d(k)+1\bigr)^2\left\|\sum_{j=0}^{k-d(k)}\frac{x(k-j)-x(k-j+1)}{k-d(k)+1}\right\|^2\nonumber\\
&{=}\bigl(\tau(k)+1\bigr)^2\left\|\sum_{j=0}^{\tau(k)}\frac{x(k-j)-x(k-j+1)}{\tau(k)+1}\right\|^2.
\end{align*}
Then, by the convexity of the norm $\|\cdot \|$, we conclude that
\begin{align}
U_2&\leq \bigl(\tau(k)+1\bigr)\sum_{j=0}^{\tau(k)} \bigl\|x(k-j)-x(k-j+1)\bigr\|^2\nonumber\\
&\leq \bigl(\tau_{\max}+1\bigr)\sum_{j=0}^{\tau_{\max}} \bigl\|x(k-j)-x(k-j+1)\bigr\|^2,
\label{Proof 6}
\end{align}
where the last inequality comes from our assumption that $\tau(k)\leq \tau_{\max}$ for all $k\in \mathbb{N}_0$. Substituting inequalities~\eqref{Proof 5} and~\eqref{Proof 6} into the bound~\eqref{Proof 4} and simplifying yield
\small
\begin{align*}
\phi(x(k+1))-\phi(z)+\frac{1}{\gamma(k)}D_\omega\bigl(x(k+1),z\bigr)&\leq \frac{1}{2\eta(k)}\bigl\|e\bigl(d(k)\bigr)\bigr\|^2_*\nonumber\\
&\hspace{0.5cm}+\bigl\langle e\bigl(d(k)\bigr),x(k)-z \bigr\rangle+\frac{1}{\gamma(k)}D_\omega\bigl(x(k),z\bigr)\nonumber\\
&\hspace{0.5cm}+\frac{L(\tau_{\max}+1)}{2}\sum_{j=0}^{\tau_{\max}} \bigl\|x(k-j)-x(k-j+1)\bigr\|^2\nonumber\\
&\hspace{0.5cm}-\frac{1}{2}\left(\frac{1}{\gamma(k)}-\eta(k)\right)\|x(k+1)- x(k) \|^2\nonumber\\
&\hspace{0.5cm}-\frac{\mu_\Psi}{2}\bigl\|z-x(k+1)\bigr\|^2.
\end{align*}
\normalsize
Setting $z=x^\star$, where $x^{\star}\in X^{\star}$, completes the proof.
\end{proof}

The next result follows from Lemma~\ref{Lemma 1} by taking summation of the relations in~\eqref{Lemma 1-1}.

\begin{lemma}
\label{Lemma 2}
Let Assumptions~\ref{Assumption 1}--\ref{Assumption 4} hold. Assume also that $\{\gamma(k)\}_{k\in\mathbb{N}_0}$ is set to
\begin{align*}
\gamma(k)=\frac{1}{\eta(k)+L(\tau_{\max}+1)^2},\quad k\in\mathbb{N}_0,
\end{align*}
where $\eta(k)$ is positive for all $k$. Then, the iterates $\{x(k)\}_{k\in\mathbb{N}_0}$ produced by Algorithm~\ref{Algorithm 1} satisfy
\small
\begin{align*}
\sum_{k=0}^{T-1}\bigl(\phi(x(k+1))-\phi^\star\bigr)&\leq \sum_{k=0}^{T-1}\frac{1}{2\eta(k)}\bigl\|e\bigl(d(k)\bigr)\bigr\|^2_*\nonumber\\
&\hspace{0.5cm}+\sum_{k=0}^{T-1}\bigl\langle e\bigl(d(k)\bigr),x(k)-x^\star \bigr\rangle+ \frac{1}{\gamma(0)}D_\omega\bigl(x(0),x^\star\bigr)\nonumber\\
&\hspace{0.5cm}+\sum_{k=0}^{T-1}\left(\frac{1}{\gamma(k+1)}-\frac{1}{\gamma(k)}\right)D_\omega\bigl(x(k+1),x^\star\bigr)\\
&\hspace{0.5cm}-\frac{\mu_\Psi}{2}\sum_{k=0}^{T-1}\bigl\|x(k+1)-x^\star\bigr\|^2,
\end{align*}
\normalsize
for all $T\in\mathbb{N}$.
\end{lemma}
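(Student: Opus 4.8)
The plan is to take the per-iteration inequality~\eqref{Lemma 1-1} established in Lemma~\ref{Lemma 1}, specialize the step-size to the prescribed choice, and sum over $k=0,\ldots,T-1$. First I would substitute $\gamma(k)^{-1}=\eta(k)+L(\tau_{\max}+1)^2$, so that $\tfrac{1}{\gamma(k)}-\eta(k)=L(\tau_{\max}+1)^2$ and the coefficient of $\|x(k+1)-x(k)\|^2$ in~\eqref{Lemma 1-1} becomes exactly $-\tfrac{L(\tau_{\max}+1)^2}{2}$. Summing the resulting inequalities over $k$, the left-hand side produces $\sum_{k=0}^{T-1}\bigl(\phi(x(k+1))-\phi^\star\bigr)+\sum_{k=0}^{T-1}\tfrac{1}{\gamma(k)}D_\omega(x(k+1),x^\star)$; the error terms $\tfrac{1}{2\eta(k)}\|e(d(k))\|_*^2$ and $\langle e(d(k)),x(k)-x^\star\rangle$, together with the $-\tfrac{\mu_\Psi}{2}\|x(k+1)-x^\star\|^2$ terms, carry over unchanged, and there remain two blocks to simplify: a Bregman-distance block and a squared-increment block.

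For the Bregman block I would move $\sum_{k=0}^{T-1}\tfrac{1}{\gamma(k)}D_\omega(x(k+1),x^\star)$ to the right-hand side and combine it with $\sum_{k=0}^{T-1}\tfrac{1}{\gamma(k)}D_\omega(x(k),x^\star)$ via an Abel (summation-by-parts) rearrangement. Writing $c_k=1/\gamma(k)$ and $a_k=D_\omega(x(k),x^\star)$, the combined quantity equals $c_0a_0+\sum_{k=0}^{T-1}(c_{k+1}-c_k)a_{k+1}-c_Ta_T$, and dropping the nonnegative term $c_Ta_T=\tfrac{1}{\gamma(T)}D_\omega(x(T),x^\star)\ge 0$ (nonnegativity of $D_\omega$, Remark~\ref{Remark 1}) gives precisely the two Bregman terms $\tfrac{1}{\gamma(0)}D_\omega(x(0),x^\star)+\sum_{k=0}^{T-1}\bigl(\tfrac{1}{\gamma(k+1)}-\tfrac{1}{\gamma(k)}\bigr)D_\omega(x(k+1),x^\star)$ that appear in the statement.

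For the squared-increment block I would show that the aggregated delay term is absorbed by the negative term generated in the first step, namely
\begin{align*}
\frac{L(\tau_{\max}+1)}{2}\sum_{k=0}^{T-1}\sum_{j=0}^{\tau_{\max}}\bigl\|x(k-j)-x(k-j+1)\bigr\|^2\;\le\;\frac{L(\tau_{\max}+1)^2}{2}\sum_{k=0}^{T-1}\bigl\|x(k+1)-x(k)\bigr\|^2.
\end{align*}
This follows by interchanging the order of summation: with the convention that increments at negative time indices vanish (consistent with $\tau(k)\le k$), each term $\|x(i)-x(i+1)\|^2$ is counted at most $\tau_{\max}+1$ times, because for a fixed $i$ the pairs $(k,j)$ with $k-j=i$, $0\le k\le T-1$, $0\le j\le\tau_{\max}$ range over at most $\tau_{\max}+1$ values of $j$. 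Hence this block contributes nonpositively and may be dropped. Collecting the surviving terms then yields the claimed inequality.

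I expect the only genuinely delicate point to be the bookkeeping in the squared-increment block: one must handle the boundary indices $x(k-j)$ with $k-j<0$ carefully, state explicitly the convention under which the corresponding increments vanish, and then verify the counting bound $\le\tau_{\max}+1$ uniformly in $i$. Everything else is a direct substitution, a telescoping/Abel step, and the discarding of two nonnegative quantities.
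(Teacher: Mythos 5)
Your proposal is correct and follows essentially the same route as the paper's proof: apply Lemma~\ref{Lemma 1} with $\eta(k)=\gamma(k)^{-1}-L(\tau_{\max}+1)^2$, sum over $k$, handle the Bregman terms by what the paper does as a per-iteration add-and-subtract of $\gamma(k+1)^{-1}D_\omega(x(k+1),x^\star)$ followed by telescoping (algebraically identical to your Abel rearrangement, including discarding $\gamma(T)^{-1}D_\omega(x(T),x^\star)\ge 0$), and absorb the delay double sum into the negative squared-increment term by interchanging the order of summation with the convention $x(k)=x(0)$ for $k\le 0$, each increment being counted at most $\tau_{\max}+1$ times. Your bookkeeping at the boundary indices matches the paper's convention, so no gap remains.
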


\begin{proof}
Applying Lemma~\ref{Lemma 1} with
\begin{align*}
\eta(k)=\frac{1}{\gamma(k)}-L(\tau_{\max}+1)^2,
\end{align*}
adding and subtracting $\gamma(k+1)^{-1}D_\omega\bigl(x(k+1),x^\star\bigr)$ to the left-hand side of~\eqref{Lemma 1-1}, and re-arranging terms, we obtain
\small
\begin{align*}
\phi(x(k+1))-\phi^\star+\frac{1}{\gamma(k+1)}D_\omega\bigl(x(k+1),x^\star\bigr)&\leq \frac{1}{2\eta(k)}\bigl\|e\bigl(d(k)\bigr)\bigr\|^2_*\nonumber\\
&\hspace{0.5cm}+\bigl\langle e\bigl(d(k)\bigr),x(k)-x^\star \bigr\rangle + \frac{1}{\gamma(k)}D_\omega\bigl(x(k),x^\star\bigr)\nonumber\\
&\hspace{0.5cm}+\left(\frac{1}{\gamma(k+1)}-\frac{1}{\gamma(k)}\right)D_\omega\bigl(x(k+1),x^\star\bigr)\\
&\hspace{0.5cm}+\frac{L(\tau_{\max}+1)}{2}\sum_{j=0}^{\tau_{\max}} \bigl\|x(k-j)-x(k-j+1)\bigr\|^2\nonumber\\
&\hspace{0.5cm}-\frac{L(\tau_{\max}+1)^2}{2}\|x(k+1) - x(k)\|^2\\
&\hspace{0.5cm}-\frac{\mu_\Psi}{2}\bigl\|x(k+1)-x^\star\bigr\|^2.
\end{align*}
\normalsize
Summing the preceding inequality over $k=0,\ldots,T-1$, $T\in\mathbb{N}$, yields
\small
\begin{align}
\sum_{k=0}^{T-1}\bigl(\phi(x(k+1))-\phi^\star\bigr)+\frac{1}{\gamma(T)}D_\omega\bigl(x(T),x^\star\bigr)&\leq \sum_{k=0}^{T-1}\frac{1}{2\eta(k)}\bigl\|e\bigl(d(k)\bigr)\bigr\|^2_*\nonumber\\
&\hspace{0.5cm}+\sum_{k=0}^{T-1}\bigl\langle e\bigl(d(k)\bigr),x(k)-x^\star \bigr\rangle+\frac{1}{\gamma(0)}D_\omega\bigl(x(0),x^\star\bigr)\nonumber\\
&\hspace{0.5cm}+\sum_{k=0}^{T-1}\left(\frac{1}{\gamma(k+1)}-\frac{1}{\gamma(k)}\right)D_\omega\bigl(x(k+1),x^\star\bigr)\nonumber\\
&\hspace{0.5cm}+\frac{L(\tau_{\max}+1)}{2}\sum_{k=0}^{T-1}\sum_{j=0}^{\tau_{\max}} \bigl\|x(k-j)-x(k-j+1)\bigr\|^2\nonumber\\
&\hspace{0.5cm}-\frac{L(\tau_{\max}+1)^2}{2}\sum_{k=0}^{T-1}\|x(k+1)-x(k) \|^2\nonumber\\
&\hspace{0.5cm}-\frac{\mu_\Psi}{2}\sum_{k=0}^{T-1}\bigl\|x(k+1)-x^\star\bigr\|^2\nonumber\\
&\hspace{0.0cm}\leq \sum_{k=0}^{T-1}\frac{1}{2\eta(k)}\bigl\|e\bigl(d(k)\bigr)\bigr\|^2_*\nonumber\\
&\hspace{0.5cm}+\sum_{k=0}^{T-1}\bigl\langle e\bigl(d(k)\bigr),x(k)-x^\star \bigr\rangle+\frac{1}{\gamma(0)}D_\omega\bigl(x(0),x^\star\bigr)\nonumber\\
&\hspace{0.5cm}+\sum_{k=0}^{T-1}\left(\frac{1}{\gamma(k+1)}-\frac{1}{\gamma(k)}\right)D_\omega\bigl(x(k+1),x^\star\bigr)\nonumber\\
&\hspace{0.5cm}-\frac{\mu_\Psi}{2}\sum_{k=0}^{T-1}\bigl\|x(k+1)-x^\star\bigr\|^2,
\label{Proof 7}
\end{align}
\normalsize
where the second inequality used the facts
\begin{align*}
\sum_{k=0}^{T-1}\sum_{j=0}^{\tau_{\max}} \bigl\|x(k-j)-x(k-j+1)\bigr\|^2&=\sum_{j=0}^{\tau_{\max}}\sum_{k=-j}^{T-j-1} \bigl\|x(k)-x(k+1)\bigr\|^2\\
&=\sum_{j=0}^{\tau_{\max}}\sum_{k=0}^{T-j-1} \bigl\|x(k)-x(k+1)\bigr\|^2\\
&\leq \sum_{j=0}^{\tau_{\max}}\sum_{k=0}^{T-1} \bigl\|x(k)-x(k+1)\bigr\|^2\\
&\leq (\tau_{\max}+1)\sum_{k=0}^{T-1} \bigl\|x(k)-x(k+1)\bigr\|^2,
\end{align*}
and $x(k)=x(0)$ for all $k\leq 0$. Dropping the second term on the left-hand side of~\eqref{Proof 7} concludes the proof.
\end{proof}

\begin{lemma}
\label{Lemma 3}
Let $\|\cdot \|$ be a norm over $\mathbb{R}^n$ and let $\|\cdot \|_{\star}$ be its dual norm. Let $\omega$ be a $1$-strongly convex function with respect to $\|\cdot \|$ over $\mathbb{R}^n$. If $y_1,\ldots,y_b\in\mathbb{R}^n$ are mean zero random variables drawn i.i.d. from a distribution $\mathcal{P}$, then
\begin{align*}
\mathbb{E}\left[\left\|\frac{1}{b}\sum_{i=1}^by_i\right\|^2_*\right]\leq \frac{c}{b^2}\sum_{i=1}^b\mathbb{E}\left[\left\|y_i\right\|^2_*\right],
\end{align*}
where $c\in[1,b]$ is given by
\begin{align*}
 c= \left\{
  \begin{array}{ll}
    1, & \textup{if}\;\|\cdot\|_*=\|\cdot\|_2,\\
    {2\max_{\|x\|=1}\omega(x)},&\textup{otherwise}.
  \end{array}
   \right.
\end{align*}
\end{lemma}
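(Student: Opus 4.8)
The plan is to split on the two cases in the definition of $c$. When $\|\cdot\|_*=\|\cdot\|_2$ (so every norm in sight is Euclidean) the statement is elementary: writing $\bar y=\tfrac1b\sum_{i=1}^b y_i$ and expanding the square,
\begin{align*}
\mathbb{E}\bigl[\|\bar y\|_2^2\bigr]=\frac{1}{b^2}\sum_{i=1}^b\mathbb{E}\bigl[\|y_i\|_2^2\bigr]+\frac{1}{b^2}\sum_{i\neq j}\mathbb{E}\bigl[\langle y_i,y_j\rangle\bigr].
\end{align*}
Each cross term vanishes because the $y_i$ are independent with zero mean, so $\mathbb{E}[\langle y_i,y_j\rangle]=\langle\mathbb{E}[y_i],\mathbb{E}[y_j]\rangle=0$ for $i\neq j$; this yields the bound with $c=1$.

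For a general norm, the tool I would use is the duality between strong convexity and smoothness: since $\omega$ is $1$-strongly convex with respect to $\|\cdot\|$ on $\mathbb{R}^n$, its Fenchel conjugate $\omega^{*}$ is finite and continuously differentiable on $\mathbb{R}^n$ with $\nabla\omega^{*}$ being $1$-Lipschitz with respect to $\|\cdot\|_*$, equivalently
\begin{align*}
\omega^{*}(u+v)\le\omega^{*}(u)+\langle\nabla\omega^{*}(u),v\rangle+\tfrac12\|v\|_*^2,\qquad\forall u,v\in\mathbb{R}^n .
\end{align*}
After a harmless normalization (shifting $\omega$ so that it attains its minimum value $0$ at the origin) we get $\omega(x)\ge\tfrac12\|x\|^2$, $\omega^{*}(0)=0$, and $\nabla\omega^{*}(0)=0$.

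Next I would run a telescoping argument over the partial sums $S_k:=\sum_{i=1}^k y_i$, with $S_0=0$. Applying the smoothness inequality with $u=S_{k-1}$ and $v=y_k$, then taking the conditional expectation given $y_1,\dots,y_{k-1}$ and using that $y_k$ is independent of $S_{k-1}$ with $\mathbb{E}[y_k]=0$, the linear term drops and
\begin{align*}
\mathbb{E}\bigl[\omega^{*}(S_k)\bigr]\le\mathbb{E}\bigl[\omega^{*}(S_{k-1})\bigr]+\tfrac12\,\mathbb{E}\bigl[\|y_k\|_*^2\bigr].
\end{align*}
Summing from $k=1$ to $b$ and using $\omega^{*}(S_0)=0$ gives $\mathbb{E}[\omega^{*}(S_b)]\le\tfrac12\sum_{i=1}^b\mathbb{E}[\|y_i\|_*^2]$. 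It then remains to trade $\omega^{*}$ for $\|\cdot\|_*^2$: the two-sided quadratic comparison $\tfrac12\|x\|^2\le\omega(x)\le\tfrac{c}{2}\|x\|^2$, with $c=2\max_{\|x\|=1}\omega(x)$ (an equality for $\omega=\tfrac12\|\cdot\|_2^2$ and for the $p$-norm prox-functions $\tfrac{1}{2(p-1)}\|\cdot\|_p^2$), dualizes to $\tfrac{1}{2c}\|y\|_*^2\le\omega^{*}(y)\le\tfrac12\|y\|_*^2$. Hence $\mathbb{E}[\|S_b\|_*^2]\le 2c\,\mathbb{E}[\omega^{*}(S_b)]\le c\sum_{i=1}^b\mathbb{E}[\|y_i\|_*^2]$, and dividing by $b^2$ finishes the proof since $\bar y=S_b/b$.

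I expect the telescoping to be routine and the genuine obstacle to be the last step: producing the \emph{lower} comparison $\omega^{*}(y)\ge\tfrac{1}{2c}\|y\|_*^2$. Strong convexity of $\omega$ directly delivers only the upper comparison $\omega^{*}\le\tfrac12\|\cdot\|_*^2$; the lower one rests on the quadratic upper bound $\omega(x)\le\max_{\|z\|=1}\omega(z)\,\|x\|^2$, and one has to be careful that the recentering used to arrange $\omega^{*}(0)=0$ and $\nabla\omega^{*}(0)=0$ does not spoil the value of $\max_{\|x\|=1}\omega(x)$ that defines $c$. A secondary point, which I would either cite or spell out, is justifying at this level of generality that $\omega^{*}$ is differentiable with $1$-Lipschitz gradient so that the displayed smoothness inequality is legitimate.
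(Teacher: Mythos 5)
Your Euclidean branch is correct, and for the general branch you have chosen essentially the route that the cited literature takes (the paper itself gives no proof of Lemma~\ref{Lemma 3}: it only invokes Lemma B.2 of Cotter et al.\ and \S 4.1 of Dekel et al., so the comparison here is with your argument on its own merits). The telescoping over $S_k=\sum_{i\le k}y_i$ using the $1$-smoothness of $\omega^{*}$ and the zero-mean/independence structure is fine. The genuine gap is the final comparison step. The hypothesis of the lemma controls $\omega$ only on the unit sphere, so the bound you invoke, $\omega(x)\le \max_{\|z\|=1}\omega(z)\,\|x\|^{2}$ for all $x$, is false for a general $1$-strongly convex $\omega$ on $\mathbb{R}^{n}$. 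For instance, take any admissible $\omega_{0}$ and set $\omega(x)=\omega_{0}(x)+\bigl(\max\{0,\|x\|^{2}-1\}\bigr)^{2}$: this is still convex (hence still $1$-strongly convex), agrees with $\omega_{0}$ on the unit sphere, so it leaves $c$ unchanged, but it grows like $\|x\|^{4}$; consequently $\omega^{*}(y)$ grows only like $\|y\|_{*}^{4/3}$, and no constant makes the dualized lower bound $\omega^{*}(y)\ge \tfrac{1}{2c}\|y\|_{*}^{2}$ hold for large $y$. Since $S_{b}$ is in general unbounded, the pointwise trade of $\omega^{*}$ for $\|\cdot\|_{*}^{2}$ is simply unavailable, and the telescoped estimate $\mathbb{E}[\omega^{*}(S_{b})]\le \tfrac12\sum_{i}\mathbb{E}\|y_{i}\|_{*}^{2}$ cannot be converted into the stated second-moment bound by this device. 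The valid unit-ball substitute, $\omega^{*}(y)\ge \|y\|_{*}-c/2$, combined with rescaling the $y_{i}$, only yields $\bigl(\mathbb{E}\|S_{b}\|_{*}\bigr)^{2}\le c\sum_{i}\mathbb{E}\|y_{i}\|_{*}^{2}$, a first-moment bound that is strictly weaker than the claim (Jensen goes the wrong way).

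Two secondary points you flag but do not resolve also matter: translating the minimizer of $\omega$ to the origin (to get $\omega^{*}(0)=0$ and $\nabla\omega^{*}(0)=0$) changes the values of $\omega$ on the unit sphere and hence the constant $c$; and the everywhere-differentiability and $1$-Lipschitz-gradient property of $\omega^{*}$ should be quoted from a standard reference on strong convexity/smoothness duality. Your argument does become complete under the extra hypothesis that (after normalization) $\omega(0)=0$ and $\omega(x)\le \tfrac{c}{2}\|x\|^{2}$ globally -- which holds for the $2$-homogeneous prox-functions such as $\tfrac{1}{2(p-1)}\|\cdot\|_{p}^{2}$ and covers the cases the paper actually uses -- but it does not prove the lemma under its stated hypotheses; that burden is exactly what the paper's citation to Cotter et al.\ is carrying, and a self-contained proof needs either such an added growth condition on $\omega$ or a different potential (e.g., an equivalent smooth squared dual norm) in place of $\omega^{*}$.
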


\begin{proof}
The result follows from~\cite[Lemma B.2]{Cotter:11} and convexity of the norm $\|\cdot\|_*$. For further details, see~\cite[\S 4.1]{Dekel:12}.
\end{proof}

% =============================================================================================================================
%
% Theorem 1
%
% =============================================================================================================================

\subsection{Proof of Theorem~\ref{Theorem 1}}
\label{Theorem:1}

Assume that the step-size $\{\gamma(k)\}_{k\in\mathbb{N}_0}$ is set to
\begin{align*}
\gamma(k)=\gamma=\frac{1}{\eta+L(\tau_{\max}+1)^2},
\end{align*}
for some $\eta>0$. It is clear that $\gamma$ satisfies~\eqref{Fixed Step-size}. Applying Lemma~\ref{Lemma 2} with $\mu_\Psi=0$, $\gamma(k)=\gamma$ and $\eta(k)=\eta$, we obtain
\small
\begin{align}
\sum_{k=0}^{T-1}\bigl(\phi(x(k+1))-\phi^\star\bigr)&\leq \sum_{k=0}^{T-1}\frac{1}{2\eta}\bigl\|e\bigl(d(k)\bigr)\bigr\|^2_*+\sum_{k=0}^{T-1}\bigl\langle e\bigl(d(k)\bigr),x(k)-x^\star \bigr\rangle+ \frac{D_\omega\bigl(x(0),x^{\star}\bigr)}{\gamma},
\label{Proof 8}
\end{align}
\normalsize
for all $T\in\mathbb{N}$. Each $x(k)$, $k\in\mathbb{N}$, is a deterministic function of the history $\xi_{[k-1]}:=\{\xi_i(t)\;|\;i=1,\ldots,b,\;t=0,\ldots,k-1\}$ but not of $\xi_i(k)$. Since $\nabla f(x)=\mathbb{E}_\xi [\nabla_x F(x,\xi)]$, it follows that
\begin{align*}
\mathbb{E}_{|\xi_{[k-1]}}\left[\bigl\langle e\bigl(d(k)\bigr),x(k)-x^\star \bigr\rangle\right]=0.
\end{align*}
Moreover, as $\xi_i$ and $\xi_j$ are independent whenever $i\neq j$, it follows from Lemma~\ref{Lemma 3} that
\begin{align*}
\mathbb{E}\bigl[\|e\bigl(d(k)\bigr)\|^2_*\bigr]&=\mathbb{E}\left[\left\|\frac{1}{b}\sum_{i=1}^b\bigl(\nabla f(x(d(k)))-\nabla_x F(x(d(k)),\xi_i)\bigr)\right\|^2_*\right]\\
&\leq \frac{c}{b^2}\sum_{i=1}^b\mathbb{E}\left[\left\|\nabla f(x(d(k)))-\nabla_x F(x(d(k)),\xi_i)\right\|^2_*\right]\\
&\leq \frac{c\sigma^2}{b},
\end{align*}
where the last inequality follows from Assumption~\ref{Assumption 3}. Taking expectation on both sides of~\eqref{Proof 8} and using the above observations yield
\begin{align*}
\sum_{k=1}^T\bigl(\mathbb{E}[\phi(x(k))]-\phi^{\star}\bigr)\leq \frac{c\sigma^2}{2\eta b}T+\frac{D_\omega\bigl(x(0),x^{\star}\bigr)}{\gamma}.
\end{align*}
By the convexity of $\phi$, we have
\begin{align*}
\phi\bigl({x}_{\textup{ave}}(T)\bigr)=\phi\left(\frac{1}{T}\sum_{k=1}^T x(k)\right)\leq \frac{1}{T}\sum_{k=1}^T \phi\bigl(x(k)\bigr),
\end{align*}
which implies that
\begin{align*}
\mathbb{E}\bigl[\phi({x}_{\textup{ave}}\bigl(T)\bigr)\bigr]-\phi^{\star}\leq \frac{c\sigma^2}{2\eta b}+\frac{D_\omega\bigl(x(0),x^{\star}\bigr)}{\gamma T}.
\end{align*}
Substituting $\eta=\gamma^{-1}-L(\tau_{\max}+1)^2$ into the above inequality proves the theorem.

% =============================================================================================================================
%
% Theorem 2
%
% =============================================================================================================================

\subsection{Proof of Theorem~\ref{Theorem 2}}
\label{Theorem:2}

Assume that the step-size $\{\gamma(k)\}_{k\in\mathbb{N}_0}$ is chosen such that $\gamma(k)^{-1}=L(\tau_{\max}+1)^2+\alpha(k)$ where
\begin{align*}
\alpha(k)=\frac{\sigma\sqrt{c}\sqrt{k+1}}{R\sqrt{b}}.
\end{align*}
Since $\gamma(k)$ is a non-increasing sequence, and $D_\omega(x,y)\leq R^2$ for all $x,y\in \;\textup{dom}\;\Psi$, we have
\begin{align*}
\sum_{k=0}^{T-1}\left(\frac{1}{\gamma(k+1)}-\frac{1}{\gamma(k)}\right)&D_\omega\bigl(x(k+1),x^{\star}\bigr)\leq \left(\frac{1}{\gamma(T)}-\frac{1}{\gamma(0)}\right)R^2.
\end{align*}
Applying Lemma~\ref{Lemma 2} with $\mu_\Psi=0$ and $\eta(k)=\alpha(k)$, taking expecation, and using Lemma~\ref{Lemma 3} completely identically to the proof of Theorem~\ref{Theorem 1}, we then obtain
\begin{eqnarray}
\sum_{k=1}^T\bigl(\mathbb{E}[\phi(x(k))]-\phi^{\star}\bigr)\leq \frac{R^2}{\gamma(T)}+\frac{c\sigma^2}{2 b}\sum_{k=0}^{T-1}\frac{1}{\alpha(k)}.
\label{Proof Theorem 3-1}
\end{eqnarray}
Viewing the sum as an lower-estimate of the integral of the function $y(t)=1/\sqrt{t+1}$, one can verify that
\begin{align*}
\sum_{k=0}^{T-1}\frac{1}{\alpha(k)}=\sum_{k=0}^{T-1} \frac{1}{\widetilde{\alpha}\sqrt{k+1}}&\leq\frac{1}{{\widetilde{\alpha}}}\left(1+\int_0^{T-1}\frac{dt}{\sqrt{t+1}}\right)\\
&\leq \frac{2 \sqrt{T}}{\widetilde{\alpha}},
\end{align*}
where $\widetilde{\alpha}=(\sigma\sqrt{c})/(R \sqrt{b})$. Substituting this inequality into the bound~\eqref{Proof Theorem 3-1}, we obtain the claimed guaranteed bound.

% =============================================================================================================================
%
% Theorem 3
%
% =============================================================================================================================

\subsection{Proof of Theorem~\ref{Theorem 3}}
\label{Theorem:3}

Assume that the step-size $\{\gamma(k)\}_{k\in\mathbb{N}_0}$ in Algorithm~\ref{Algorithm 1} is set to $\gamma(k)^{-1}=2L(\tau_{\max}+1)^2+\beta(k)$, with
\begin{align*}
\beta(k)=\frac{\mu_\Psi}{3Q}\bigl(k+\tau_{\max}+1\bigr).
\end{align*}
We first describe some important properties of $\gamma(k)$ relevant to our proof. Clearly, $\gamma(k)$ is non-increasing, \textit{i.e.},
\begin{align}
\frac{1}{\gamma(k)}&\leq\frac{1}{\gamma(k+1)},\label{Proof Theorem 3:0:0}
\end{align}
for all $k\in\mathbb{N}_0$. Since $\gamma(0)^{-1}\leq \gamma(k)^{-1}$, we have
\begin{align}
2L(\tau_{\max}+1)^2+\frac{\mu_\Psi\tau_{\max}}{3Q}&\leq \frac{1}{\gamma(k)}.
\label{Proof Theorem 3:0}
\end{align}
Moreover, one can easily verify that
\begin{align*}
\frac{1}{\gamma(k+1)^{2}}-\frac{1}{\gamma(k)^{2}}&=\frac{\mu_\Psi}{Q}\left(\frac{4L}{3}(\tau_{\max}+1)^2+\frac{\mu_\Psi}{3Q}\left(\frac{2}{3}(k+\tau_{\max})+1\right)\right)\\
&\leq\frac{\mu_\Psi}{Q}\left(2L(\tau_{\max}+1)^2+\frac{\mu_\Psi}{3Q}\bigl(k+\tau_{\max}+1\bigr)\right)\\
&=\frac{\mu_\Psi}{Q}\frac{1}{\gamma(k)},
\end{align*}
which implies that
\begin{align}
\frac{1}{\gamma(k+1)^{2}}\leq\frac{1}{\gamma(k)}\left(\frac{1}{\gamma(k)}+\frac{\mu_\Psi}{Q}\right),
\label{Proof Theorem 3:1}
\end{align}
for all $k\in\mathbb{N}_0$. Finally, by the definition of $\gamma(k)$, we have
\begin{align*}
\frac{\gamma(k)}{\gamma(k+\tau_{\max})}&=1+\frac{\frac{\mu_\Psi}{3Q}\tau_{\max}}{2L(\tau_{\max}+1)^2+\frac{\mu_\Psi}{3Q}\bigl(k+\tau_{\max}+1\bigr)}\\
&\leq 1+\frac{\mu_\Psi\tau_{\max}}{6LQ(\tau_{\max}+1)^2},
\end{align*}
and hence,
\begin{align}
\frac{1}{\gamma(k+\tau_{\max})}\leq \left(1+\frac{\mu_\Psi\tau_{\max}}{6LQ(\tau_{\max}+1)^2}\right)\frac{1}{\gamma(k)}.
\label{Proof Theorem 3:2}
\end{align}

We are now ready to prove Theorem~\ref{Theorem 3}. Applying Lemma~\ref{Lemma 1} with
\begin{align*}
\eta(k)=\frac{1}{2\gamma(k)},\quad k\in\mathbb{N}_0,
\end{align*}
and using the fact
\begin{align*}
D_{\omega}\bigl(x(k+1),x^\star\bigr)\leq \frac{Q}{2}\bigl\| x(k+1)- x^\star\bigr\|^2,
\end{align*}
by Assumption~\ref{Assumption 5}, we obtain
\small
\begin{align*}
\phi\bigl(x(k+1)\bigr)-\phi^\star+\left(\frac{1}{\gamma(k)}+\frac{\mu_\Psi}{Q}\right)D_\omega\bigl(x(k+1),x^\star\bigr)&\leq \gamma(k)\bigl\|e\bigl(d(k)\bigr)\bigr\|^2_*\nonumber\\
&\hspace{0.5cm}+\bigl\langle e\bigl(d(k)\bigr),x(k)-x^\star \bigr\rangle+\frac{1}{\gamma(k)}D_\omega\bigl(x(k),x^\star\bigr)\nonumber\\
&\hspace{0.5cm}+\frac{L(\tau_{\max}+1)}{2}\sum_{j=0}^{\tau_{\max}} \bigl\|x(k-j)-x(k-j+1)\bigr\|^2\nonumber\\
&\hspace{0.5cm}-\frac{1}{4\gamma(k)}\|x(k+1) -x(k)\|^2\nonumber.
\end{align*}
\normalsize
Multiplying both sides of this relation by $1/\gamma(k)$, and then using \eqref{Proof Theorem 3:1}, we have
\small
\begin{align*}
\frac{1}{\gamma(k)}\bigl(\phi\bigl(x(k+1)\bigr)-\phi^\star\bigr)+\frac{1}{\gamma(k+1)^2}D_\omega\bigl(x(k+1),x^\star\bigr)&\leq \bigl\|e\bigl(d(k)\bigr)\bigr\|^2_*\nonumber\\
&\hspace{0.5cm}+\frac{1}{\gamma(k)}\bigl\langle e\bigl(d(k)\bigr),x(k)-x^\star \bigr\rangle+\frac{1}{\gamma(k)^2}D_\omega\bigl(x(k),x^\star\bigr)\nonumber\\
&\hspace{0.5cm}+\frac{L(\tau_{\max}+1)}{2\gamma(k)}\sum_{j=0}^{\tau_{\max}} \bigl\|x(k-j)-x(k-j+1)\bigr\|^2\nonumber\\
&\hspace{0.5cm}-\frac{1}{4\gamma(k)^2}\|x(k+1) - x(k)\|^2\nonumber.
\end{align*}
\normalsize
Summing the above inequality from $k = 0$ to $k = T-1$, $T\in\mathbb{N}$, and dropping the first term on the left-hand side yield
\small
\begin{align}
\frac{1}{\gamma(T)^2}D_\omega\bigl(x(T),x^\star\bigr)&\leq \sum_{k=0}^{T-1}\bigl\|e\bigl(d(k)\bigr)\bigr\|^2_*\nonumber\\
&\hspace{0.5cm}+\sum_{k=0}^{T-1}\frac{1}{\gamma(k)}\bigl\langle e\bigl(d(k)\bigr),x(k)-x^\star \bigr\rangle+\frac{1}{\gamma(0)^2}D_\omega\bigl(x(0),x^\star\bigr)\nonumber\\
&\hspace{0.5cm}+\frac{L(\tau_{\max}+1)}{2}\sum_{k=0}^{T-1}\sum_{j=0}^{\tau_{\max}}\frac{1}{\gamma(k)} \bigl\|x(k-j)-x(k-j+1)\bigr\|^2\nonumber\\
&\hspace{0.5cm}-\frac{1}{4}\sum_{k=0}^{T-1}\frac{1}{\gamma(k)^2}\|x(k+1) - x(k)\|^2.
\label{Proof Theorem 3:3}
\end{align}
\normalsize
What remains is to bound the third term on the right-hand side of~\eqref{Proof Theorem 3:3}. It follows from~\eqref{Proof Theorem 3:0:0}--\eqref{Proof Theorem 3:2} that

\small
\begin{align*}
\frac{L(\tau_{\max}+1)}{2}\sum_{k=0}^{T-1}\sum_{j=0}^{\tau_{\max}}\frac{1}{\gamma(k)} \bigl\|x(k-j)-x(k-j+1)\bigr\|^2&=\frac{L(\tau_{\max}+1)}{2}\sum_{j=0}^{\tau_{\max}}\sum_{k=0}^{T-j-1}\frac{1}{\gamma(k+j)} \bigl\|x(k)-x(k+1)\bigr\|^2\\
&\leq \frac{L(\tau_{\max}+1)}{2}\sum_{j=0}^{\tau_{\max}}\sum_{k=0}^{T-1} \frac{1}{\gamma(k+j)}\bigl\|x(k)-x(k+1)\bigr\|^2\\
&\overset{\eqref{Proof Theorem 3:0:0}}{\leq} \frac{L(\tau_{\max}+1)}{2}\sum_{j=0}^{\tau_{\max}}\sum_{k=0}^{T-1}\frac{1}{\gamma(k+\tau_{\max})} \bigl\|x(k)-x(k+1)\bigr\|^2\\
&=\frac{L(\tau_{\max}+1)^2}{2}\sum_{k=0}^{T-1}\frac{1}{\gamma(k+\tau_{\max})} \bigl\|x(k)-x(k+1)\bigr\|^2\\
&\overset{\eqref{Proof Theorem 3:2}}{\leq}\frac{2L(\tau_{\max}+1)^2+\frac{\mu_\Psi\tau_{\max}}{3Q}}{4}\sum_{k=0}^{T-1}\frac{1}{\gamma(k)} \bigl\|x(k)-x(k+1)\bigr\|^2\\
&\overset{\eqref{Proof Theorem 3:0}}{\leq}\frac{1}{4}\sum_{k=0}^{T-1}\frac{1}{\gamma(k)^2} \bigl\|x(k)-x(k+1)\bigr\|^2.
\end{align*}
\normalsize
Substituting the above inequality into~\eqref{Proof Theorem 3:3}, and then taking expectation on both sides (similarly to the proof of Theorems~\ref{Theorem 1} and~\ref{Theorem 2}), we have
\begin{align}
&\frac{1}{\gamma(T)^2}\mathbb{E}\bigl[D_\omega\bigl(x(T),x^\star\bigr)\bigr]\leq \frac{c\sigma^2 T}{b}+\frac{1}{\gamma(0)^2}D_\omega\bigl(x(0),x^\star\bigr).
\label{Proof Theorem 3:4}
\end{align}
According to Remark~\ref{Remark 1},
\begin{align*}
\frac{1}{2}\| x(T)- x^\star\|^2\leq D_\omega\bigl(x(T),x^\star\bigr).
\end{align*}
Moreover, by the definition of $\gamma(k)$,
\begin{align*}
 \frac{\mu_\Psi (T+1)}{3Q} \leq \beta(T)\leq \frac{1}{\gamma(T)}.
\end{align*}
Combing these inequalities with the bound~\eqref{Proof Theorem 3:4}, we conclude
\begin{align*}
\mathbb{E}\bigl[\| x(T)- x^\star\|^2\bigr]\leq \frac{18 c\sigma^2 Q^2}{b\mu^2_\Psi (T+1)}+\frac{2\left(\frac{6LQ}{\mu_\Psi}+1\right)^2(\tau_{\max}+1)^4}{(T+1)^2}D_\omega\bigl(x(0),x^\star\bigr).
\end{align*}
The proof is complete.

%\begin{table*}[ht]
%\centering
%    \begin{tabular}{ | l || l || p{1.55cm} |}
%    \hline
%    \;\;\textbf{Paper} & \hspace{0.65cm}\textbf{Regularization function} & \textbf{Asynchronous} \\
%    \hline
%     \;\;\;\cite{Dekel:12}  & Arbitrary simple convex functions & \hspace{0.65cm}$\times$ \\
%    \hline
%    \;\;\;\cite{Zinkevich:09}  & \hspace{2cm}$0$ & Fixed delay \\
%    \hline
%     \;\;\;\cite{AgD:12}  & Indicator functions  of compact sets & \hspace{0.6cm}Yes \\
%    \hline
%    This paper & Arbitrary simple convex functions & \hspace{0.5cm} Yes \\
%    \hline
%    \end{tabular}
%    %\caption{dsd}
%\end{table*}
%
%============================================================================================================================================
%
% References
%
%============================================================================================================================================

\bibliographystyle{IEEEtran}
\bibliography{bibliografia}

\end{document}